\newtheorem{theorem}{Theorem}
\newtheorem{corollary}[theorem]{\small Corollary}
\theoremstyle{definition}
\newtheorem{definition}[theorem]{\small Definition}
\newtheorem{example}[theorem]{\small Example}
\title{$C$-open Sets on Topological Spaces }
\author{Mesfer H. Alqahtani\footnote{University College of Umluj, Mathematics Department, University of Tabuk, Tabuk, Saudi Arabia, E-mail:m\b{ }halqahtani@ut.edu.sa }}
\begin{document}

\maketitle
\begin{abstract}  An open (resp., closed) subset $A$ of a topological space $(X, \mathcal{T})$ is called {\it $C$-open} (resp., $C$-closed) set if $ cl(A)\setminus A $ (resp., $ A\setminus int(A) $) is a countable set. This paper aims to present the concept of $C$-open and $C$-closed sets. We first investigate their basic properties. Then, we found some operators such as interior, closure, limit, border, and frontier using $C$-open and $C$-closed sets.
The relationships between them are clarified and discussed. Finally, we  exhibit continuous maps and compact space defined using $C$-open and $C$-closed sets and scrutinize their main properties.\\
 
\textbf{2020 Mathematics Subject Classifications}:  54A05, 54C10, 54D30
\\
\textbf{Key Words and Phrases}: $C$-open set, $C$-interior points, $C$-closure points, $C$-continuity, $C$-compactness.\\

\end{abstract}
\section{Introduction}In the topological space $X$ a subset $B$ of a space $X$ is said to be a {\it regularly-open}, called also {\it open domain} if it is the interior of its own closure \cite{Kuratowski}. A subset $B$ is said to be a {\it regularly-closed}, called also {\it closed domain} if it is the closure of its own interior or if its complement is an open domain. An open (resp., closed) subset $A$ of a topological space $(X, \mathcal{T})$ is called {\it $F$-open} (resp., $F$-closed) set if $ cl(A)\setminus A $ (resp., $ A\setminus int(A) $) is finite set \cite{Mesfer2}. In this present paper, we have concerned is to introduce and study the notions of $C$-open and $C$-closed sets in topological spaces that are independent of regularly-open and regularly-closed sets. In Section 2, we have defined $C$-open and $C$-closed sets and investigated their basic properties. Then we have shown about the relationship between them with the other kinds such as open domain, closed domain, $F$-open, $F$-closed, closed and open sets in topological spaces. In Section 3, we have established some operators such as $C$-interior, $C$-closure, $C$-limit, $C$-border and $C-$frontier using $C$-open and $C$-closed sets and the relationships between them are clarified and discussed. In Section 4, we have displayed $C$-continuous functions and $C$-compact sets defined using $C$-open and $C$-closed sets and examine their basic properties. In addition, I have shown the $C$-continuious, $C$-open and onto image of $C$-compact space is $C$-compact space. Throughout this paper, the subset $B$ of a space $X$, I will denote the complement of $B$ in $X$ by $X\setminus B$, the set of positive integers numbers by $\mathbb{N}$, the set of integers numbers by $\mathbb{Z}$, the set of rational numbers by $\mathbb{Q}$, the set of irrational numbers by $\mathbb{I}$, the set of real numbers by $\mathbb{R},$ and usual topology in $\mathbb{R}$ by $\mathcal{U}$ \cite{Steen}. Unless or otherwise mentioned, $X$ stands for the topological space $(X, \mathcal{T}).$ We do not assume $T_{2}$ in the definition of compactness and countable compactness. We also do not assume regularity in the definition of $F-$Lindel$\ddot{o}$f.

\section{Main properties of $C$-open sets}

\begin{definition}\label{13p8}

An open subset $A$ of a topological space $X$ is called {\it $C$-open} set if $ cl(A)\setminus A $ is countable set. That is, $A$ is an open set and the frontier of $A$ is a countable set.
\end{definition}

\begin{definition}\label{13p9}
A closed subset $A$ of a topological space $(X,\mathcal{T})$ is called {\it $C$-closed} set if $A\setminus int(A)$ is countable set. That is, $A$ is a closed set and the frontier of $A$ is countable set.
\end{definition}
We denoted for the collection of all $C$-open (resp., $C$-closed) subsets of a topological space $(X,\mathcal{T})$ by $CO(X)$ (resp., $CC(X)$).
\begin{theorem}\label{13p4}
  The complement of any $C$-open (resp., $C$-closed) subset of a topological space $(X,\mathcal{T})$ is a $C$-closed (resp., $C$-open) set.
\end{theorem}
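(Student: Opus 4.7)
The plan is to establish the statement by direct use of the standard complementation identities $\mathrm{int}(X\setminus A)=X\setminus \mathrm{cl}(A)$ and $\mathrm{cl}(X\setminus A)=X\setminus \mathrm{int}(A)$, together with elementary set algebra. The theorem essentially says that the frontier — which is the set whose countability defines both notions — is invariant under complementation, so the result should reduce to rewriting one difference as the other.

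For the first direction, I would let $A$ be $C$-open and set $B=X\setminus A$. Since $A$ is open, $B$ is closed, so the openness/closedness requirement of Definition~\ref{13p9} is met. It remains to show $B\setminus \mathrm{int}(B)$ is countable. Using $\mathrm{int}(B)=X\setminus \mathrm{cl}(A)$, a short computation gives
\[
B\setminus \mathrm{int}(B)=(X\setminus A)\cap \mathrm{cl}(A)=\mathrm{cl}(A)\setminus A,
\]
and this last set is countable by hypothesis. Hence $B\in CC(X)$.

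For the converse, I would let $A$ be $C$-closed and again set $B=X\setminus A$, which is now open. Using $\mathrm{cl}(B)=X\setminus \mathrm{int}(A)$, the analogous calculation yields
\[
\mathrm{cl}(B)\setminus B=(X\setminus \mathrm{int}(A))\cap A=A\setminus \mathrm{int}(A),
\]
which is countable by assumption on $A$, so $B\in CO(X)$.

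There is no real obstacle here; the only thing to be careful about is choosing the right complementation identity in each direction and checking the two set-algebra steps. The observation driving the proof is simply that $\mathrm{cl}(A)\setminus A$ and $(X\setminus A)\setminus \mathrm{int}(X\setminus A)$ are literally the same set, so the countability hypothesis transfers with no loss.
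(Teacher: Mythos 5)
Your proposal is correct and follows essentially the same route as the paper's own proof: both directions reduce to the complementation identities $\mathrm{int}(X\setminus A)=X\setminus \mathrm{cl}(A)$ and $\mathrm{cl}(X\setminus A)=X\setminus \mathrm{int}(A)$ followed by the same set-algebra computation showing the two difference sets coincide. No gaps.
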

\begin{proof}
 Let $A$ be a any $C$-open subset of a topological space $(X,\mathcal{T}).$ Then $X\setminus A$ is closed and $(X\setminus A)\setminus int(X\setminus A)=(X\setminus A)\setminus (X\setminus cl(A))=(X\setminus A)\cap cl(A)=cl(A) \setminus A$ is countable, because $A$ is $C$-open. Therefore, $X\setminus A$ is $C$-closed set. On the other hand, suppose that $A$ be a any $C$-closed subset of a topological space $(X,\mathcal{T}).$ Then $X\setminus A$ is open and $cl(X\setminus A) \setminus( X\setminus A)=(X\setminus X\setminus A)\cap cl(X\setminus A)=A\cap cl(X\setminus A)=A\setminus (X\setminus cl(X\setminus A))=A\setminus int(A)$ is countable, because $A$ is $C$-closed. Therefore, $X\setminus A$ is $C$-open set.
  
\end{proof}

 Obviously, from the definitions that any $F$-open (resp., $F$-closed) subset of a topological space $(X,\mathcal{T})$ is $C$-open (resp., $C$-closed) set. However, the converse always is not true. For example, the subset $\mathbb{R}\setminus \mathbb{Z}$ is a $C$-open subset of the usual topological space $(\mathbb{R},\mathcal{U})$, which is not $F$-open. Also by Theorem \ref{13p4}, any complement of $C$-open set is $C$-closed, then $\mathbb{Z}$ is a $C$-closed subset of the usual topological space  $(\mathbb{R},\mathcal{U})$, which is not $F$-closed, because $\mathbb{Z}\setminus int(\mathbb{Z})=\mathbb{Z}\setminus \emptyset=\mathbb{Z}$ is not finite. It is clear by the definitions that, any clopen (closed-and-open) subset of a topological space $(X,\mathcal{T})$ is $C$-clopen set. Also any countable closed set is $C$-closed. However, any countable open set may not be $C$-open. For example, $\mathbb{R}$ with particular point topology at $1.$ We have, $\mathbb{N}$ is a countable open set, but $cl(\mathbb{N})\setminus \mathbb{N}=\mathbb{R}\setminus \mathbb{N}$ is uncountable set.\\
     
In $(\mathbb{R},\mathcal{U})$ any open intervals is $C$-open set. Also any closed intervals is $C$-closed set. It is clear by the definitions \ref{13p8} and \ref{13p9}, every $C$-open and $C$-closed sets are open and closed sets, respectively. However, the converse always is not true. Here is an example of open (resp., closed) set which is not $C$-open (resp., $C$-closed).
\begin{example}\label{14p1}
 Let $(\mathbb{R},\mathcal{T_{\mathbb{I}}})$ be the excluded set topological space on $\mathbb{R}$ by $\mathbb{I}.$ Then $\mathbb{Q}$ is open in $(\mathbb{R},\mathcal{T_{\mathbb{I}}})$. But $cl(\mathbb{Q})\setminus \mathbb{Q}=\mathbb{R}\setminus \mathbb{Q}=\mathbb{I}$ is uncountable set. Hence, $\mathbb{Q}$ is not $C$-open set. Also, $\mathbb{I}$ is an example of closed set which is not $C$-closed set.
\end{example}

There is an example of $C$-open (resp., $C$-closed) set is not an open (resp., closed) domain set.
\begin{example}
 Let $A=(2,5)\cup (5,9)$ is a $C$-open subset in $(\mathbb{R},\mathcal{U})$. However, $A$ is not open domain.
Moreover, $\mathbb{R}\setminus A$ is $C$-closed set, but $\mathbb{R}\setminus A$ is not closed domain.
\end{example}

 We provide an example of an open (resp., closed) domain set is not $C$-open (resp., $C$-closed) set.

 \begin{example}\label{13p3}
By Example \ref{14p1}, let $H=\{1,2,3\}$, then $int(cl(H))=int(cl(\{1,2,3\}))=int(\{1,2,3\}\cup \mathbb{I})=\{1,2,3\}=H$ is open domain. But $cl(H) \setminus H=cl(\{1,2,3\})\setminus \{1,2,3\}=(\{1,2,3\}\cup \mathbb{I})\setminus \{1,2,3\}=\mathbb{I}$ is uncountable set, then $H$ is not $C$-open set. Moreover, let $K=\mathbb{R}\setminus \{1,2,3\} ,$ then $cl(int(K))=cl(int(\mathbb{R}\setminus \{1,2,3\}))=cl(\mathbb{Q}\setminus\{1,2,3\})=\mathbb{I} \cup (\mathbb{Q}\setminus\{1,2,3\})=\mathbb{R}\setminus \{1,2,3\}=K$ is closed domain. But $K \setminus int(K)=(\mathbb{R}\setminus \{1,2,3\})\setminus int(\mathbb{R}\setminus \{1,2,3\})= (\mathbb{R}\setminus\{1,2,3\})\setminus ( \mathbb{Q}\setminus\{1,2,3\})=\mathbb{I}$ is uncountable set, then $K$ is not $C$-closed set.
 \end{example}

  For a topological space $(X,\mathbb{T}),$ we always have:
  \begin{center}
     $F$-open set $\Longrightarrow$ $C$-open set $\Longrightarrow$ open set \\
      $F$-closed set $\Longrightarrow$ $C$-closed set $\Longrightarrow$ closed set
  \end{center}
 None of the above implications is reversible.

\begin{theorem}\label{aaaa}
The finite unions of $C$-open sets is $C$-open.

\end{theorem}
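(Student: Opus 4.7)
The plan is to reduce the problem to two well-known facts: closure distributes over finite unions, and a finite union of countable sets is countable. Let $A_1,\dots,A_n \in CO(X)$ and set $A = \bigcup_{i=1}^n A_i$.

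First I would handle openness: since each $A_i$ is open (by Definition \ref{13p8}) and arbitrary unions of open sets are open, $A$ is open. This is the easy half.

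Next I would control the frontier $cl(A)\setminus A$. Using the standard identity $cl\bigl(\bigcup_{i=1}^n A_i\bigr)=\bigcup_{i=1}^n cl(A_i)$, which holds for finite unions, I would establish the set-theoretic containment
\[
cl(A)\setminus A \;=\; \Bigl(\bigcup_{i=1}^n cl(A_i)\Bigr)\setminus \Bigl(\bigcup_{i=1}^n A_i\Bigr)\;\subseteq\;\bigcup_{i=1}^n\bigl(cl(A_i)\setminus A_i\bigr).
\]
The inclusion is immediate by chasing an element: if $x\in cl(A_j)$ for some $j$ but $x\notin A_i$ for every $i$, then in particular $x\notin A_j$, so $x\in cl(A_j)\setminus A_j$. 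Each summand $cl(A_i)\setminus A_i$ is countable by the $C$-open hypothesis, and a finite union of countable sets is countable, so $cl(A)\setminus A$ is countable.

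Combining the two observations, $A$ is open with countable frontier, so $A\in CO(X)$ by Definition \ref{13p8}. I do not anticipate any real obstacle here; the only point requiring a line of justification is the containment above, and the only structural ingredient beyond elementary set theory is the finiteness of the index set (which is essential, since closure does not in general commute with infinite unions, and countability is not preserved under arbitrary unions either).
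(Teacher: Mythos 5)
Your proposal is correct and follows essentially the same route as the paper: distribute the closure over the finite union and invoke countability of a finite union of countable sets. In fact your version is slightly more careful, since you claim only the inclusion $cl(A)\setminus A\subseteq\bigcup_{i=1}^{n}\bigl(cl(A_i)\setminus A_i\bigr)$ (which is all that is needed), whereas the paper asserts an equality that can fail in general (e.g.\ $A_1=(0,1)$, $A_2=(0,2)$ in $(\mathbb{R},\mathcal{U})$).
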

\begin{proof}
Suppose that $K_{i}$ be a $C$-open set for all $i \in \{1,2,3,\dots,n\}$. Then $K_{i}$ is an open set and $cl(K_{i})\setminus K_{i}$ is countable for all $i$. Since $\bigcup_{i=1}^{n}K_{i}$ is open, then we need to show the other condition of $C$-open set. Now, we have, $$cl(\bigcup_{i=1}^{n} K_{i})\setminus\bigcup_{i=1}^{n} K_{i}= \bigcup_{i=1}^{n}(cl(K_{i})\setminus K_{i}) ,$$
       Since the finite union of countable sets is countable. Then, $cl(\bigcup_{i=1}^{n}K_{i}) \setminus \bigcup_{i=1}^{n}K_{i}$ is countable. Therefore, $\bigcup_{i=1}^{n}K_{i}$ is $C$-open.

\end{proof}

By Theorem \ref{aaaa}, Theorem \ref{13p4} and by Morgan's Laws, we have the following Corollary:
\begin{corollary}
  The finite intersections of $C$-closed sets is $C$-closed.
\end{corollary}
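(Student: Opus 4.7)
The plan is to deduce this corollary directly from the two preceding results by passing to complements, exactly as the paragraph preceding the statement suggests. Given $C$-closed sets $F_1, \dots, F_n$, I would first apply Theorem \ref{13p4} to each $F_i$ to obtain that $X \setminus F_i$ is $C$-open for every $i \in \{1, 2, \dots, n\}$.

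Next, I would invoke Theorem \ref{aaaa} on this finite collection of $C$-open sets to conclude that $\bigcup_{i=1}^{n}(X \setminus F_i)$ is $C$-open. Then I would apply De Morgan's law, which gives the set-theoretic identity
\[
\bigcup_{i=1}^{n}(X \setminus F_i) \;=\; X \setminus \bigcap_{i=1}^{n} F_i,
\]
so $X \setminus \bigcap_{i=1}^{n} F_i$ is $C$-open. Finally, a second application of Theorem \ref{13p4} (in the other direction) shows that $\bigcap_{i=1}^{n} F_i$ is $C$-closed, which is the desired conclusion.

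There is really no obstacle here: the only content of the proof is that all three inputs align correctly, namely that Theorem \ref{13p4} is a genuine equivalence between $C$-open and $C$-closed under complementation, that Theorem \ref{aaaa} handles the finite-union step for $C$-open sets, and that De Morgan's law converts a finite union of complements into the complement of a finite intersection. The proof will therefore be a short, formal chain of substitutions rather than a new argument.
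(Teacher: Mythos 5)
Your proof is correct and follows exactly the route the paper intends: the paper derives this corollary by citing Theorem \ref{13p4}, Theorem \ref{aaaa}, and De Morgan's laws, which is precisely the chain of complementation, finite union, and complementation back that you spell out. No differences to note.
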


\section{Some operators via $C$-open and $C$-closed sets}

We recall some symbol in \cite{Mesfer2}. Let $K$ be any subset of the topological space $(X ,\mathcal{T}),$ and via the concepts of $F$-open sets we denoted for the interior by ${\rm int^{F}}(K)$, the closure by ${\rm cl^{F}}(K)$, the border by ${\rm Bd^{F}}(K)$, the frontier by ${\rm Fr^{F}}(K)$, the exterior by ${\rm Ext^{F}}(K)$ and the derived set by ${\rm D^{F}}(K)$. 
\begin{definition} \label{13p7}
 Let $K$ be a subset of the topological space $(X \mathcal{T}).$ Then,
  \begin{enumerate}
    \item [i)] The {\it $C$-interior} of $K$ is defined as the union of all $C$-open subsets of $K$, and is denoted by $int^{C}(K)$ or the largest $C$-open set contained in $K.$
    \item [ii)] The {\it $C$-clouser} of $K$ is defined as the intersection  of all $C$-closed sets containing $K$, and is denoted by $cl^{C}(K)$.
  \end{enumerate}

\end{definition}
\begin{definition}
  Let $x \in int^{C}(K)$ if and only if there exists a $C$-open set containing $x$ and contained in $K$.

\end{definition}
\begin{theorem}\label{13p14}
   Let $K$ be a subset of the topological space $(X,\mathcal{T}).$ Then,

  \begin{enumerate}
    \item [i)] $int^{C}(K)\subseteq int(K)\subseteq K.$
    \item [ii)] $K \subseteq cl(K)\subseteq cl^{C}(K).$
  \end{enumerate}

\end{theorem}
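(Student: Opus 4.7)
The plan is to derive both inclusions directly from the definitions of $int^{C}$ and $cl^{C}$ together with the already-noted implications that every $C$-open set is open and every $C$-closed set is closed (the second row of implications after Example \ref{13p3}).

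For part (i), I would recall that by Definition \ref{13p7}, $int^{C}(K)$ is the union of all $C$-open subsets of $K$, while the ordinary interior $int(K)$ is the union of all open subsets of $K$. Since every $C$-open set is in particular open, the family $\{A \subseteq K : A \in CO(X)\}$ is contained in $\{A \subseteq K : A \in \mathcal{T}\}$, so taking unions preserves the inclusion and yields $int^{C}(K) \subseteq int(K)$. The second inclusion $int(K) \subseteq K$ is standard, which finishes (i).

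For part (ii), $K \subseteq cl(K)$ is the standard inclusion. For $cl(K) \subseteq cl^{C}(K)$ I would argue dually: $cl^{C}(K)$ is the intersection of all $C$-closed sets containing $K$, while $cl(K)$ is the intersection of all closed sets containing $K$. Since every $C$-closed set is closed, the family of $C$-closed supersets of $K$ is a subfamily of the family of closed supersets of $K$. Intersecting over the smaller family produces a (possibly) larger set, giving $cl(K) \subseteq cl^{C}(K)$. Alternatively, one can observe that every $C$-closed set $F \supseteq K$ is closed and contains $K$, hence contains $cl(K)$, so $cl(K)$ lies in the intersection defining $cl^{C}(K)$.

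There is really no substantive obstacle here: the result is a bookkeeping consequence of the definitions and of the containments $CO(X) \subseteq \mathcal{T}$ and $CC(X) \subseteq \{\text{closed sets of }X\}$ already justified in Section 2. The only place where one must be a little careful is the monotonicity direction for intersections versus unions in part (ii), which reverses the set-theoretic inclusion between the indexing families.
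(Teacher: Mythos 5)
Your proposal is correct and follows essentially the same route as the paper: both parts reduce to the facts that every $C$-open set is open and every $C$-closed set is closed. Your direct argument for the inclusion $cl(K)\subseteq cl^{C}(K)$ (every $C$-closed superset of $K$ is a closed superset of $K$, hence contains $cl(K)$, so the intersection does too) is in fact cleaner than the paper's proof by contradiction, which implicitly treats $cl^{C}(K)$ as if it were itself a single $C$-closed set; but the underlying idea is the same.
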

\begin{proof}
   \begin{enumerate}
    \item [i)] Let $U$ be the largest $C$-open set contained in $K$, i.e. $int^{C}(K)=U$. Since any $C$-open set is open, then $U \subseteq int(K)$. Hence $int^{C}(K)\subseteq int(K)$. By the definition of interior of $K$, we have $int(K)\subseteq K.$ Therefore, $int^{C}(K)\subseteq int(K)\subseteq K.$
    \item [ii)] By the definition of closure, we know that $K \subseteq cl(K)$. Now, let $cl(K)=F$ is the smallest closed set containing $K$, then $F \subseteq cl^{C}(K)$. Suppose not, $F \nsubseteq cl^{C}(K)$, then there exist $C$-closed set $E$ such that $cl^{C}(K)=E$ and $K\subseteq E$. But $E$ is closed, because any $C$-closed is closed. Thus contradiction. Hence, $K \subseteq cl(K)\subseteq cl^{C}(K).$

  \end{enumerate}
\end{proof}
In general $int(K)\nsubseteq int^{C}(K)$ and $cl^{C}(K)\nsubseteq cl(K)$. For example:
\begin{example}
 By Example \ref{14p1} we have the open set $\mathbb{Q}$ is neither $F$-open nor $C$-open set. Then $int(\mathbb{Q})=\mathbb{Q}$, but $int^{C}(\mathbb{Q})\subsetneqq \mathbb{Q}$. Therefore, $int(K)\nsubseteq int^{C}(K).$ By Example \ref{14p1} we have $\mathbb{I}$ is not $C$-closed set and $cl^{C}(\mathbb{I})= \mathbb{R}$, but $cl(\mathbb{I})=\mathbb{I}.$ Therefore, $ cl^{C}(K)\nsubseteq cl(K)$.
\end{example}
\begin{definition}\label{13p13}
 Let $K$ be a subset of the topological space $(X,\mathcal{T})$. A point $x\in X$ is said to be {\it $C$-limit points} (or an {\it $C$-accumulation point}, or a {\it $C$-cluster point}) of $K$ if and
only if every $C$-open set $V$ containing $x,$ contains at least one point of $K$
different from $x$. The set of all $C$-limit points of $K$ is called the {\it $C-$derived set} of $K$ and denoted by $D^{C}(K).$

\end{definition}
\begin{theorem}
 Let $K$ and $H$ be subsets of a topological space $(X \mathcal{T}).$ Then we have the following topological properties:
  \begin{enumerate}
    \item [(i)] $D(K)\subset D^{C}(K)$, where $D(K)$ is the derived set of $K.$
    \item [(ii)] If $K\subseteq H,$ then $D^{C}(K)\subseteq D^{C}(H).$
     \item [(iii)] $D^{C}(K)\cup D^{C}(H)=D^{C}(K\cup H)$ and $D^{C}(K\cap H)\subset  D^{C}(K)\cap D^{C}( H).$
  \end{enumerate}
\end{theorem}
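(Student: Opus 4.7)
The plan is to dispatch parts (i) and (ii) by direct unpacking of the definition of a $C$-limit point, and to concentrate the real work on part (iii). The background fact I will lean on throughout is that every $C$-open set is open, so the $C$-open neighborhoods of any point form a subfamily of its open neighborhoods.

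For (i), I would simply unfold definitions: if $x \in D(K)$, then every open neighborhood of $x$ meets $K \setminus \{x\}$, so in particular every $C$-open neighborhood does, giving $x \in D^{C}(K)$. Part (ii) is pure monotonicity: a $C$-open neighborhood $V$ of $x$ that meets $K \setminus \{x\}$ automatically meets the larger set $H \setminus \{x\}$. For (iii), the inclusion $D^{C}(K) \cup D^{C}(H) \subseteq D^{C}(K \cup H)$ is immediate from (ii) applied to $K, H \subseteq K \cup H$, and the final inclusion $D^{C}(K \cap H) \subseteq D^{C}(K) \cap D^{C}(H)$ follows from (ii) applied to $K \cap H \subseteq K$ and $K \cap H \subseteq H$.

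The only nontrivial direction is $D^{C}(K \cup H) \subseteq D^{C}(K) \cup D^{C}(H)$, which I would prove by contrapositive. If $x \notin D^{C}(K) \cup D^{C}(H)$, choose $C$-open neighborhoods $U, V$ of $x$ with $U \cap (K \setminus \{x\}) = \emptyset$ and $V \cap (H \setminus \{x\}) = \emptyset$; then $W := U \cap V$ is a neighborhood of $x$ disjoint from $(K \cup H) \setminus \{x\}$, which forces $x \notin D^{C}(K \cup H)$. The main obstacle is that Theorem \ref{aaaa} records closure of $C$-openness only under finite \emph{unions}, so I still need to verify that $W$ itself is $C$-open. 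For this I would use the elementary inclusion $cl(U \cap V) \setminus (U \cap V) \subseteq (cl(U) \setminus U) \cup (cl(V) \setminus V)$, obtained from $cl(U \cap V) \subseteq cl(U) \cap cl(V)$ together with the identity $(A \cap B) \setminus (U \cap V) = (A \cap U^{c}) \cup (B \cap V^{c})$ applied with $A = cl(U)$, $B = cl(V)$; the right-hand side is a finite union of countable sets, hence countable, so $W$ is indeed $C$-open, completing the argument.
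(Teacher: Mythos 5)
Your proof is correct and follows the same route the paper gestures at: parts (i) and (ii) by unwinding the definition of a $C$-limit point together with the fact that every $C$-open set is open, and part (iii) by the standard contrapositive argument for derived sets with open neighborhoods replaced by $C$-open ones. The one place you go beyond the paper is worth keeping: the "standard proof" silently requires that the intersection $U\cap V$ of two $C$-open sets be $C$-open, a fact the paper never establishes (Theorem \ref{aaaa} only covers finite \emph{unions}), and your verification via $cl(U\cap V)\setminus(U\cap V)\subseteq (cl(U)\setminus U)\cup(cl(V)\setminus V)$ correctly fills that gap.
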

\begin{proof}
 For $(i)$ it suffices to observe that every $C$-open is open. For $ (ii)$ follow from Definition \ref{13p13}. For $(iii)$ is a modification of the standard proof for $D$, where open sets are replaced by $C$-open sets.
\end{proof}
In general $D^{C}(K) \nsubseteq D(K)$. For example:
\begin{example}
  Let $(\mathbb{R},\mathcal{T}_{\frac{1}{2}})$ be a topological space, where $\mathcal{T}_{\frac{1}{2}}$ is the particular point topology at $\frac{1}{2}$. Let $K=\mathbb{R}\setminus\{\frac{1}{2}\}$, then $D(\mathbb{R}\setminus\{\frac{1}{2}\})=\emptyset$, because $\{\frac{1}{2},x\}$ is an open set for any $x \in \mathbb{R} $ and $(\{\frac{1}{2},x\}\setminus \{x\})\cap (\mathbb{R}\setminus\{\frac{1}{2}\})=\emptyset.$ Since any nonempty $C$-open set in $(\mathbb{R},\mathcal{T}_{\frac{1}{2}})$ is of the form $\mathbb{R}\setminus H$, where $H$ is any countable set and $\frac{1}{2} \notin H.$ let $x \in \mathbb{R}$ be arbitrary, then any $C$-open set containing $x$ contains an other point of $\mathbb{R}\setminus\{\frac{1}{2}\}$
different from $x$. Hence $D^{C}(\mathbb{R}\setminus\{\frac{1}{2}\})=\mathbb{R}.$ Then $D^{C}(\mathbb{R}\setminus\{\frac{1}{2}\}) \nsubseteq D(\mathbb{R}\setminus\{\frac{1}{2}\}).$ Therefore, $D^{C}(K) \nsubseteq D(K).$
\end{example}
In general $D^{C}(K\cap H)\nsupseteq  D^{C}(K)\cap D^{C}( H).$ For example:
\begin{example}
  Let $K=(1,2)$ and $H=(2,3)$ be a subsets in $(\mathbb{R},\mathcal{U})$. Since any $C$-open set in $(\mathbb{R},\mathcal{U})$ is an open interval or $\mathbb{R}\setminus F$, where $F$ is any countable closed set, then $D^{C}(K\cap H)=D^{C}((1,2)\cap (2,3))=D^{C}(\emptyset)=\emptyset$ and $D^{C}(K)\cap D^{C}( H)=D^{C}((1,2))\cap D^{C}((2,3))=[1,2]\cap [2,3]=\{2\}$. Then  $D^{C}(K\cap H)\nsupseteq  D^{C}(K)\cap D^{C}( H).$

\end{example}

\begin{theorem}\label{13p16}
 Let $K$ and $H$ be subsets of a topological space $(X \mathcal{T}).$ Then we have the following topological properties:
\begin{enumerate}
  \item [(i)] $int^{C}(X)=X.$
  \item [(ii)] $int^{C}(K)\subseteq K.$
  \item [(iii)] If $K\subseteq H$, then $int^{C}(K)\subseteq int^{C}(H)$.
  \item [(iv)] $int^{C}(int^{C}(K))=int^{C}(K).$
  \item [(v)] $int^{C}(K \cap H)=int^{C}(K) \cap int^{C}(H).$
  \item [(vi)] $int^{C}(K) \cup int^{C}(H)\subseteq int^{C}(K \cup H).$
\end{enumerate}

\end{theorem}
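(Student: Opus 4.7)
The plan is to verify the six properties in the stated order, leveraging the earlier material as much as possible. For (i), I would just observe that $X$ is open and $cl(X)\setminus X = \emptyset$ is countable, so $X$ is $C$-open; since it is the largest $C$-open subset of itself, $int^{C}(X)=X$. Part (ii) is immediate from Theorem \ref{13p14}(i), which already gives $int^{C}(K)\subseteq int(K)\subseteq K$. Part (iii) follows from the definition: if $K\subseteq H$, then every $C$-open set contained in $K$ is also a $C$-open set contained in $H$, so the union defining $int^{C}(K)$ is a subcollection of the union defining $int^{C}(H)$.

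For (iv), I would invoke the ``largest $C$-open'' formulation in Definition \ref{13p7}: $int^{C}(K)$ is itself a $C$-open set, hence it is the largest $C$-open set contained in $int^{C}(K)$, which gives $int^{C}(int^{C}(K))=int^{C}(K)$. Part (vi) then follows at once from monotonicity (iii) applied twice, since $K\subseteq K\cup H$ and $H\subseteq K\cup H$ yield $int^{C}(K)\cup int^{C}(H)\subseteq int^{C}(K\cup H)$.

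The nontrivial part is (v). The inclusion $int^{C}(K\cap H)\subseteq int^{C}(K)\cap int^{C}(H)$ follows from monotonicity applied to $K\cap H\subseteq K$ and $K\cap H\subseteq H$. For the reverse inclusion, I need to know that $int^{C}(K)\cap int^{C}(H)$ is itself $C$-open, because then, being contained in $K\cap H$, it must lie inside the largest $C$-open subset of $K\cap H$, namely $int^{C}(K\cap H)$. This is the main obstacle, since the paper has proved closure under finite \emph{unions} (Theorem \ref{aaaa}) but not under finite intersections. I would fill this gap with a short lemma: if $A,B$ are $C$-open, then $A\cap B$ is open, and using $cl(A\cap B)\subseteq cl(A)\cap cl(B)$ together with the set-theoretic identity
\[
(cl(A)\cap cl(B))\setminus(A\cap B)=\bigl(cl(B)\cap(cl(A)\setminus A)\bigr)\cup\bigl(cl(A)\cap(cl(B)\setminus B)\bigr),
\]
one sees that $cl(A\cap B)\setminus(A\cap B)$ is contained in $(cl(A)\setminus A)\cup(cl(B)\setminus B)$, a union of two countable sets, hence countable. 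Thus $A\cap B$ is $C$-open, and (v) follows.
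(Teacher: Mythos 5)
Your proposal is correct and follows the same overall architecture as the paper's proof: (i)--(iv) from the definitions, (v) by double inclusion using the ``largest $C$-open set'' characterization, and (vi) from monotonicity. The one substantive difference is that you identify and repair the genuine weak point in the paper's argument for (v): the paper simply asserts that $int^{C}(K)\cap int^{C}(H)$ is $C$-open, with no justification and no prior result to lean on (Theorem \ref{aaaa} covers finite unions only). Your lemma --- that for $C$-open $A,B$ the set $cl(A\cap B)\setminus(A\cap B)$ sits inside $\bigl(cl(B)\cap(cl(A)\setminus A)\bigr)\cup\bigl(cl(A)\cap(cl(B)\setminus B)\bigr)\subseteq (cl(A)\setminus A)\cup(cl(B)\setminus B)$, hence is countable --- is correct and is exactly the missing ingredient. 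One caveat worth noting: both your argument and the paper's still rely on Definition \ref{13p7}'s claim that $int^{C}(K)$ is itself the \emph{largest} $C$-open subset of $K$, i.e.\ that the (possibly infinite) union of all $C$-open subsets of $K$ is again $C$-open; nothing in the paper establishes this, and it can fail (in $(\mathbb{R},\mathcal{U})$ an open set with uncountable frontier is a union of $C$-open intervals without being $C$-open). Your intersection lemma lets you sidestep this entirely in (v): given $x\in int^{C}(K)\cap int^{C}(H)$, pick $C$-open $U,V$ with $x\in U\subseteq K$ and $x\in V\subseteq H$; then $U\cap V$ is $C$-open and contained in $K\cap H$, so $x\in int^{C}(K\cap H)$. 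Phrased that way, your proof of (v) is strictly more robust than the paper's; parts (i)--(iii) and (vi) are unaffected either way, while (iv) genuinely needs the ``largest $C$-open set'' reading of the definition in both treatments.
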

\begin{proof}
  The properties $(i),(ii), (iii)$ and $ (iv)$ follow from Definitions \ref{13p8} and Definition \ref{13p7}. To prove $(v)$, by property $(ii)$ we have $int^{C}(K )\subseteq K$ and $int^{C}(H )\subseteq H,$ then $int^{C}(K )\cap int^{C}(H )\subseteq K \cap H. $ As $int^{C}(K )\cap int^{C}(H )$ is $C-$open, then we have $int^{C}(K )\cap int^{C}(H )\subseteq int^{C}(K \cap H), $ because $int^{C}(K )\cap int^{C}(H )$ is $C$-open and $int^{C}(K \cap H)$ is the largest $C$-open set contained in $K \cap H.$ Conversely, $(K \cap H)\subseteq K$ and $(K \cap H)\subseteq H$, by property $(iii)$ we have $int^{C}(K \cap H)\subseteq int^{C}(K)$ and $int^{C}(K \cap H)\subseteq int^{C}(H)$, hence $int^{C}(K \cap H)\subseteq int^{C}(K)\cap int^{C}(H)$. Therefore, $int^{C}(K \cap H)=int^{C}(K) \cap int^{C}(H).$ To prove $(vi)$, since $K \subseteq (K \cup H)$ and $H \subseteq (K \cup H)$, from property $(iii)$ we have $int^{C}(K) \subseteq int^{C}(K \cup H)$ and $int^{C}(H) \subseteq int^{C}(K \cup H). $ Therefore, $int^{C}(K) \cup int^{C}(H)\subseteq int^{C}(K \cup H).$
\end{proof}
In general, $int^{C}(K \cup H)\nsubseteq int^{C}(K) \cup int^{C}(H). $ For example:
\begin{example}

  Let $K=\mathbb{Q}$ and $H=\mathbb{I}$ be a subsets in $(\mathbb{R},\mathcal{U})$, then $int^{C}(K \cup H)=\mathbb{R}$. But $\mathbb{Q}$ and $ \mathbb{I}$  are not $C$-open sets, then $int^{C}(\mathbb{Q})\subset \mathbb{Q}$ and $int^{C}(\mathbb{I})\subset \mathbb{I}$. Therefore, $int^{C}(K \cup H)\nsubseteq int^{C}(K) \cup int^{C}(H). $

\end{example}
\begin{theorem}
 Let $K$ and $H$ be subsets of a topological space $(X, \mathcal{T}).$ Then we have the following topological properties:

\begin{enumerate}
  \item [(i)] $cl^{C}(\emptyset)=\emptyset.$
  \item [(ii)] $K\subseteq cl^{C}(K).$
   \item [(iii)] If $K\subseteq H$, then $cl^{C}(K)\subseteq cl^{C}(H)$.
  \item [(iv)] $cl^{C}(K \cup H)=cl^{C}(K) \cup cl^{C}(H).$
  \item [(v)] $cl^{C}(cl^{C}(K))=cl^{C}(K).$
\end{enumerate}

\end{theorem}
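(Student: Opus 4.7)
The plan is to establish all five properties by working with the defining family $\mathcal{F}_K := \{F : F \text{ is } C\text{-closed and } K \subseteq F\}$, so that $cl^{C}(K) = \bigcap \mathcal{F}_K$. Four of the items then reduce to comparisons between such families.

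For (i), I would first observe that $\emptyset$ is itself $C$-closed (it is closed, and $\emptyset \setminus int(\emptyset) = \emptyset$ is countable), so $\emptyset \in \mathcal{F}_\emptyset$ and the intersection is $\emptyset$. Part (ii) is immediate because every $F \in \mathcal{F}_K$ contains $K$. For (iii), $K \subseteq H$ gives $\mathcal{F}_H \subseteq \mathcal{F}_K$, whence the intersection over the smaller family is larger. For (v), I would observe that $\mathcal{F}_K = \mathcal{F}_{cl^{C}(K)}$: the inclusion $\mathcal{F}_{cl^{C}(K)} \subseteq \mathcal{F}_K$ follows from (ii), while any $F \in \mathcal{F}_K$ automatically contains the intersection $cl^{C}(K)$ and hence lies in $\mathcal{F}_{cl^{C}(K)}$. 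Intersecting over equal families yields $cl^{C}(cl^{C}(K)) = cl^{C}(K)$. Crucially, this argument sidesteps the question of whether $cl^{C}(K)$ is itself $C$-closed, which may fail in general (for example, in $(\mathbb{R},\mathcal{U})$ the Cantor set is the intersection of its $C$-closed finite-interval approximations, hence equals its own $C$-closure, yet is not $C$-closed since its frontier is itself, which is uncountable).

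The main obstacle is (iv). The inclusion $\supseteq$ is immediate from (iii). For $\subseteq$, I would take $x \notin cl^{C}(K) \cup cl^{C}(H)$ and choose $F_1 \in \mathcal{F}_K$ with $x \notin F_1$ and $F_2 \in \mathcal{F}_H$ with $x \notin F_2$; the natural candidate set is $F_1 \cup F_2$, which would place $x \notin cl^{C}(K \cup H)$ provided that $F_1 \cup F_2 \in \mathcal{F}_{K \cup H}$. The missing ingredient is that finite unions of $C$-closed sets are $C$-closed, which is not recorded in the paper. I would establish it by passing to complements: first show that for $C$-open $U, V$ the intersection $U \cap V$ is $C$-open, using the pointwise inclusion $cl(U \cap V) \setminus (U \cap V) \subseteq (cl(U) \setminus U) \cup (cl(V) \setminus V)$, whose right-hand side is a finite union of countable sets and therefore countable. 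Applying Theorem \ref{13p4} then yields the dual statement for $C$-closed sets, and (iv) follows from the argument just sketched.
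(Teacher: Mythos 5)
Your proof is correct, and it is considerably more complete than the paper's, which disposes of all five items with a bare appeal to the definitions and ``set theoretic properties.'' The substantive difference is in (iv) and (v). For (iv), the standard closure argument would use that $cl^{C}(K)\cup cl^{C}(H)$ is itself a $C$-closed set containing $K\cup H$; since (as you correctly observe) $cl^{C}$ of a set need not be $C$-closed, that shortcut is unavailable, and the paper never supplies the fact actually needed, namely that the union of two $C$-closed sets is $C$-closed. The paper records only the dual statements (Theorem \ref{aaaa} and its corollary: finite unions of $C$-open sets are $C$-open, finite intersections of $C$-closed sets are $C$-closed), which do not help here. Your lemma --- $cl(U\cap V)\setminus(U\cap V)\subseteq (cl(U)\setminus U)\cup(cl(V)\setminus V)$, hence finite intersections of $C$-open sets are $C$-open, hence via Theorem \ref{13p4} finite unions of $C$-closed sets are $C$-closed --- is exactly the missing ingredient, and your witness argument ($F_{1}\cup F_{2}\in\mathcal{F}_{K\cup H}$) then closes (iv) cleanly. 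For (v), your observation that $\mathcal{F}_{K}=\mathcal{F}_{cl^{C}(K)}$ sidesteps the non-$C$-closedness of $cl^{C}(K)$, and your Cantor-set example showing this failure is genuinely possible (it equals its own $C$-closure as an intersection of finite unions of closed intervals, yet its frontier is itself and uncountable) is a worthwhile addition that the paper does not address. In short, what your route buys is a proof that actually works in the generality claimed; the paper's route, read literally, leaves (iv) unjustified.
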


\begin{proof}
  The properties $(i),(ii), (iii)$ and $ (v)$ follow from Definition \ref{13p9} and Definition \ref{13p7}. The property $(iv)$, follow from property $(iii)$, Definition \ref{13p8}, Definition \ref{13p7} and using set theoretic properties.
\end{proof}
\begin{theorem}\label{13p15}
 Let $K$ be a subset of a topological space $(X \mathcal{T}).$ Then we have the following topological properties:
\begin{enumerate}
  \item [(i)] $int^{C}(K)= X\setminus cl^{C}(X \setminus K).$
  \item [(ii)] $ cl^{C}(K)= X\setminus int^{C}(X \setminus K).$
\end{enumerate}

\end{theorem}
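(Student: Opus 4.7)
The plan is to prove (i) directly from the definitions of $int^{C}$ and $cl^{C}$ given in Definition \ref{13p7}, leveraging the complementation result in Theorem \ref{13p4}, and then to obtain (ii) as an immediate corollary of (i) by substituting $X\setminus K$ for $K$.

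For (i), I would start by writing the $C$-interior as the union of its defining family:
\[
int^{C}(K)=\bigcup\{U\subseteq K : U\in CO(X)\}.
\]
Taking complements in $X$ and applying De Morgan's law turns this into an intersection:
\[
X\setminus int^{C}(K)=\bigcap\{X\setminus U : U\subseteq K,\ U\in CO(X)\}.
\]
The key step is to rewrite the indexing family. By Theorem \ref{13p4}, the assignment $U\mapsto X\setminus U$ is a bijection between $CO(X)$ and $CC(X)$, and the inclusion $U\subseteq K$ is equivalent to $X\setminus K\subseteq X\setminus U$. Writing $F=X\setminus U$ therefore reparametrizes the intersection as $\bigcap\{F\in CC(X): X\setminus K\subseteq F\}$, which is exactly $cl^{C}(X\setminus K)$ by Definition \ref{13p7}. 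Rearranging gives $int^{C}(K)=X\setminus cl^{C}(X\setminus K)$.

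For (ii), I would simply apply (i) to the set $X\setminus K$ in place of $K$, obtaining $int^{C}(X\setminus K)=X\setminus cl^{C}(X\setminus(X\setminus K))=X\setminus cl^{C}(K)$, and then take complements of both sides to arrive at $cl^{C}(K)=X\setminus int^{C}(X\setminus K)$.

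There is no real obstacle here; this is a De Morgan style duality. The only point that requires care is that the correspondence between $C$-open subsets of $K$ and $C$-closed supersets of $X\setminus K$ must be a genuine bijection, which is why Theorem \ref{13p4} is needed in both directions (complements of $C$-open sets are $C$-closed, \emph{and} complements of $C$-closed sets are $C$-open). Once that bijection is in hand, the proof is essentially a routine relabeling of the indexed family.
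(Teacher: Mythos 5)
Your proof is correct, and it takes a genuinely cleaner route than the paper's. The paper proves (i) by two inclusions: first it observes $X\setminus K\subseteq cl^{C}(X\setminus K)$, complements to get $X\setminus cl^{C}(X\setminus K)\subseteq K$, and then asserts that $X\setminus cl^{C}(X\setminus K)$ is $C$-open so that it sits inside $int^{C}(K)$; second, it shows every $C$-open $V\subseteq K$ satisfies $V\subseteq X\setminus cl^{C}(X\setminus K)$ via $cl^{C}(X\setminus V)=X\setminus V$ and monotonicity of $cl^{C}$. Your argument instead is a single De Morgan relabeling: complement the defining union for $int^{C}(K)$ and use Theorem \ref{13p4} together with the equivalence $U\subseteq K\iff X\setminus K\subseteq X\setminus U$ to identify the resulting intersection with the defining intersection for $cl^{C}(X\setminus K)$. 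Both proofs lean on Theorem \ref{13p4} and both get (ii) by substituting $X\setminus K$ for $K$. What your version buys is robustness: the paper's first inclusion tacitly assumes that $cl^{C}(X\setminus K)$ is itself $C$-closed (equivalently that $X\setminus cl^{C}(X\setminus K)$ is $C$-open), but the paper only establishes \emph{finite} intersections of $C$-closed sets are $C$-closed, and arbitrary intersections can fail to be (e.g.\ the Cantor set is an intersection of $C$-closed sets in $(\mathbb{R},\mathcal{U})$ but is not $C$-closed). Your set-theoretic relabeling never needs $int^{C}(K)$ to be $C$-open or $cl^{C}(X\setminus K)$ to be $C$-closed, only the definitions as a union and an intersection, so it avoids that gap entirely. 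The one point worth making explicit in your write-up is that the family of $C$-open subsets of $K$ is never empty (since $\emptyset\in CO(X)$), though even the empty-family convention would give the right answer here.
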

\begin{proof}
  \begin{enumerate}
  \item [(i)] We have $int^{C}(K)= X\setminus cl^{C}(X \setminus K)$, then $X \setminus K \subseteq cl^{C}(X \setminus K)$, thus $X \setminus cl^{C}(X \setminus K) \subseteq K  .$ Since $X \setminus cl^{C}(X \setminus K)$ is $C$-open, then $X \setminus cl^{C}(X \setminus K)\subseteq int^{C}(K)$...(1). Now, let $V$ be any $C$-open set contained in $K$, i.e., $V\subseteq K$ and $V$ is $C$-open, then $X \setminus K \subseteq X \setminus V=cl^{C}(X \setminus V)$. Then $cl^{C}(X \setminus K) \subseteq X \setminus V$, hence $V\subseteq X \setminus cl^{C}(X \setminus K)$.
That is, any $C$-open set contained in $K$ is contained in $ X \setminus cl^{C}(X \setminus K)$, which means
that $int^{C}(K)\subseteq X\setminus cl^{C}(X \setminus K)$ ...(2). From (1) and (2) equality holds.

  \item [(ii)] Can be proved by replacing $K$ and $X \setminus K$ by $X \setminus K$ and $K$, respectively in $(i)$ and using set theoretic properties.
\end{enumerate}
\end{proof}

\begin{definition}\label{13p10}
 Let $K$ be a subset of a topological space $(X \mathcal{T}).$ Then {\it $C$-border} of $K$ is defined as $Bd^{C}(K)=K\setminus int^{C}(K).$

\end{definition}
\begin{theorem}
  Let $K$ be a subset of a topological space $(X \mathcal{T}).$ Then we have the following properties:
  \begin{enumerate}
    \item [(i)] $Bd(K)\subset Bd^{C}(K)$, where $Bd(K)$ denotes the border of $K.$
    \item [(ii)] $K= int^{C}(K) \cup Bd^{C}(K).$
    \item [(iii)]  $int^{C}(K) \cap Bd^{C}(K)=\emptyset.$
    \item [(iv)] $K$ is a $C$-open set if and only if $Bd^{C}(K)=\emptyset.$
    \item [(v)] $int^{C}(Bd^{C}(K))=\emptyset.$
    \item [(vi)] $Bd^{C}(Bd^{C}(K))=Bd^{C}(K).$
     \item [(vii)] $Bd^{C}(K)=K \cap cl^{C}(X\setminus K).$

  \end{enumerate}
\end{theorem}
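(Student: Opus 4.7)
The plan is to dispatch each of the seven properties by short, mostly set-theoretic arguments that lean on the definition $Bd^{C}(K)=K\setminus int^{C}(K)$ together with results already established, in particular Theorem \ref{13p14} (which gives $int^{C}(K)\subseteq int(K)\subseteq K$), Theorem \ref{13p16} (monotonicity, idempotence, and subset properties of $int^{C}$), and Theorem \ref{13p15} (the duality $int^{C}(K)=X\setminus cl^{C}(X\setminus K)$). I would prove them in the order they are stated, since each later item uses the earlier ones.

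For (i), I would note $int^{C}(K)\subseteq int(K)$ from Theorem \ref{13p14}, hence $K\setminus int(K)\subseteq K\setminus int^{C}(K)$, which is exactly the inclusion $Bd(K)\subseteq Bd^{C}(K)$. Items (ii) and (iii) are immediate from the definition together with $int^{C}(K)\subseteq K$: writing $Bd^{C}(K)=K\setminus int^{C}(K)$ gives both $K=int^{C}(K)\cup Bd^{C}(K)$ and the disjointness. For (iv), $Bd^{C}(K)=\emptyset$ is equivalent to $K\subseteq int^{C}(K)$, which combined with Theorem \ref{13p16}(ii) forces $K=int^{C}(K)$, i.e., $K$ is $C$-open (and conversely).

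The only step that requires a small argument beyond rewriting is (v). Here I would proceed by contradiction: assuming $x\in int^{C}(Bd^{C}(K))$, there is a $C$-open set $V$ with $x\in V\subseteq Bd^{C}(K)\subseteq K$; but then $V$ is a $C$-open set contained in $K$, so $V\subseteq int^{C}(K)$, giving $x\in int^{C}(K)\cap Bd^{C}(K)$, contradicting (iii). Once (v) is in hand, (vi) is immediate from the definition: $Bd^{C}(Bd^{C}(K))=Bd^{C}(K)\setminus int^{C}(Bd^{C}(K))=Bd^{C}(K)\setminus\emptyset=Bd^{C}(K)$.

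Finally, for (vii) I would apply Theorem \ref{13p15}(i) to rewrite $int^{C}(K)=X\setminus cl^{C}(X\setminus K)$ and then compute
\[
Bd^{C}(K)=K\setminus int^{C}(K)=K\setminus\bigl(X\setminus cl^{C}(X\setminus K)\bigr)=K\cap cl^{C}(X\setminus K),
\]
which gives the desired identity. I do not anticipate a real obstacle; the only place where one must think rather than unfold definitions is (v), and even there the contradiction argument is one line. All other items are rewriting, monotonicity, or direct appeals to the two cited theorems.
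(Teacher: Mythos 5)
Your proposal is correct and follows essentially the same route as the paper: item (i) via $int^{C}(K)\subseteq int(K)$, items (ii)--(iv) directly from the definition, item (v) by deriving a contradiction with (iii), item (vi) from (v), and item (vii) from the duality $int^{C}(K)=X\setminus cl^{C}(X\setminus K)$. The only cosmetic difference is that in (v) the paper invokes monotonicity of $int^{C}$ ($Bd^{C}(K)\subseteq K$ gives $int^{C}(Bd^{C}(K))\subseteq int^{C}(K)$) where you unwind the definition to a witnessing $C$-open set $V$; both arguments are equivalent.
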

\begin{proof}
To prove $(i)$, let $x \in Bd(K) $ be arbitrary, then $x \in K\setminus int(K)$, then $x \notin int(K)$. By Theorem \ref{13p14} part $(i)$ we have $int^{C}(K)\subseteq int(K) \subseteq K$, then $x\notin int^{C}(K)$. Hence $x \in (K \setminus int^{C}(K))=Bd^{C}(K)$. Therefore, $Bd(K)\subseteq Bd^{C}(K)$. The properties $(ii), (iii)$ and $ (iv)$ follow from Definition \ref{13p10}. To prove $(v)$, let $x \in int^{C}(Bd^{C}(K))$ be arbitrary, then $x \in Bd^{C}(K).$ Since $Bd^{C}(K)\subseteq K,$ then $x \in int^{C}(Bd^{C}(K))\subseteq int^{C}(K),$ then $x \in int^{C}(K) \cap Bd^{C}(K)$ which contradicts $(iii).$ For $(vi)$ from Definition \ref{13p10} and property $(v)$ we have $Bd^{C}(Bd^{C}(K))= Bd^{C}(K)\setminus int^{C}(Bd^{C}(K))=Bd^{C}(K)\setminus \emptyset =Bd^{C}(K)$. The property $(vii)$ follow from Theorem \ref{13p15} part $(i)$ and Definition \ref{13p10}.
\end{proof}

In general $Bd^{C}(K)\nsubseteq Bd(K).$ For example:
\begin{example}By Example \ref{14p1}, there exists $K=\mathbb{Q}$ is an open set which is not $C$-open set.
 Then $Bd(K)=\emptyset$. Since $\mathbb{Q}$  is not $C$-open set, then $int^{C}(\mathbb{Q}) \subsetneqq \mathbb{Q}.$ Hence, $Bd^{C}(K)=\mathbb{Q}\setminus int^{C}(\mathbb{Q})\neq \emptyset.$ Therefore, $Bd^{C}(K)\nsubseteq Bd(K).$

\end{example}

\begin{definition}\label{13p11}
 Let $K$ be a subset of a topological space $(X \mathcal{T}).$ Then {\it $C$-frontier} of $K$ is defined as $Fr^{C}(K)=cl^{C}(K)\setminus int^{C}(K).$

\end{definition}
\begin{theorem}
  Let $K$ be a subset of a topological space $(X \mathcal{T}).$ Then we have the following properties:
  \begin{enumerate}
    \item [(i)] $Fr(K)\subset Fr^{C}(K)$, where $Fr(K)$ denotes the frontier of $K.$
    \item [(ii)] $cl^{C}(K)= int^{C}(K) \cup Fr^{C}(K).$
    \item [(iii)]  $int^{C}(K) \cap Fr^{C}(K)=\emptyset.$
    \item [(iv)] $Bd^{C}(K)\subset Fr^{C}(K).$
    \item [(v)] $Fr^{C}(K)=cl^{C}(K) \cap cl^{C}(X\setminus K).$
    \item [(vi)]   $Fr^{C}(K)=Fr^{C}(X\setminus K).$
 \item [(viii)] $int^{C}(K)=K\setminus Fr^{C}(K).$
  \end{enumerate}
\end{theorem}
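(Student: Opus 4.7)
The plan is to deduce all seven assertions from three ingredients: (a) the definition $Fr^{C}(K)=cl^{C}(K)\setminus int^{C}(K)$, (b) the chain $int^{C}(K)\subseteq K\subseteq cl^{C}(K)$ together with the analogous chain $int^{C}(K)\subseteq int(K)$ and $cl(K)\subseteq cl^{C}(K)$ recorded in Theorem \ref{13p14}, and (c) the De Morgan--type duality $int^{C}(K)=X\setminus cl^{C}(X\setminus K)$ of Theorem \ref{13p15}. I would handle the items in the given order, because each later one only uses set-theoretic consequences of the earlier ones.

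For $(i)$ I would just stack the two halves of Theorem \ref{13p14}: $int^{C}(K)\subseteq int(K)$ and $cl(K)\subseteq cl^{C}(K)$, which immediately give $Fr(K)=cl(K)\setminus int(K)\subseteq cl^{C}(K)\setminus int^{C}(K)=Fr^{C}(K)$. Items $(ii)$ and $(iii)$ fall out of the inclusion $int^{C}(K)\subseteq cl^{C}(K)$: removing $int^{C}(K)$ from $cl^{C}(K)$ yields the disjoint decomposition $cl^{C}(K)=int^{C}(K)\cup Fr^{C}(K)$, and the intersection $int^{C}(K)\cap Fr^{C}(K)$ is automatically empty by construction of the set difference. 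For $(iv)$ I would substitute $K\subseteq cl^{C}(K)$ into $Bd^{C}(K)=K\setminus int^{C}(K)$ to get $Bd^{C}(K)\subseteq cl^{C}(K)\setminus int^{C}(K)=Fr^{C}(K)$. Item $(viii)$ needs a short two-sided argument: the containment $int^{C}(K)\subseteq K\setminus Fr^{C}(K)$ uses $int^{C}(K)\subseteq K$ together with $(iii)$, while the reverse inclusion comes from the observation that any $x\in K\setminus Fr^{C}(K)$ already sits in $cl^{C}(K)$ and is excluded from $cl^{C}(K)\setminus int^{C}(K)$, so it must lie in $int^{C}(K)$.

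The only item that genuinely uses the duality is $(v)$. Here I would apply Theorem \ref{13p15}(i) to rewrite $X\setminus int^{C}(K)=cl^{C}(X\setminus K)$ and then expand
\[
Fr^{C}(K)=cl^{C}(K)\setminus int^{C}(K)=cl^{C}(K)\cap(X\setminus int^{C}(K))=cl^{C}(K)\cap cl^{C}(X\setminus K).
\]
Item $(vi)$ is then free, because the right-hand side of $(v)$ is manifestly symmetric in $K$ and $X\setminus K$.

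If there is an obstacle at all, it lies only in step $(v)$: this is the single place where pure inclusions between interior, set, and closure do not suffice and one has to invoke the nontrivial complementation identity from Theorem \ref{13p15}. Everything else is routine bookkeeping with the chain $int^{C}(K)\subseteq K\subseteq cl^{C}(K)$, so the writing effort will mostly be spent making sure each item cites the correct earlier result rather than developing any new technique.
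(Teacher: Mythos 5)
Your proposal is correct and follows essentially the same route as the paper: item (i) from the two inclusions of Theorem \ref{13p14}, items (ii), (iii), (iv) and (viii) from the definition of $Fr^{C}$ together with the chain $int^{C}(K)\subseteq K\subseteq cl^{C}(K)$, item (v) from the complementation identity of Theorem \ref{13p15}, and item (vi) from the symmetry of (v). The only difference is that you spell out the set-theoretic bookkeeping that the paper compresses into ``follows from Definition \ref{13p11}.''
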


\begin{proof}
  The property $(i)$, by Theorem \ref{13p14} and Definition \ref{13p11}, we have $int^{C}(K)\subseteq int(K)$ and $cl(K)\subseteq cl^{C}(K),$ then $Fr(C)=(cl(K)\setminus int(K))\subseteq cl^{C}(K)\setminus int^{C}(K)=Fr^{C}(K).$ The properties $(ii)$ and $ (iii)$ follow from Definition \ref{13p11}. For $(iv)$, since $K\subseteq cl^{C}(K)$, then $Bd^{C}(K)= (K\setminus int^{C}(K))\subseteq (cl^{C}(K)\setminus int^{C}(K))=Fr^{C}(K).$ The property $(v)$ follow from Theorem \ref{13p15} and Definition \ref{13p11}. The property $(vi)$ follow from property $(v)$ and Definition \ref{13p11}. The property $(vii)$ follow from Definition \ref{13p11}.

\end{proof}

In general $Fr^{C}{K}\nsubseteq Fr(K)$ and $Fr^{C}(K)\nsubseteq Bd^{C}(K).$ For example:
\begin{example}
By Example \ref{14p1}, there exists $K= \mathbb{Q}$ such that $Fr(\mathbb{Q})=\mathbb{I}$. Since $\mathbb{Q}$ is not $C$-open set, then $int^{C}(\mathbb{Q})\subsetneqq \mathbb{Q}$, hence  $Fr^{C}{\mathbb{Q}}=(cl^{C}(\mathbb{Q})\setminus int^{C}(\mathbb{Q}))=\mathbb{R}\setminus int^{C}(\mathbb{Q})=\mathbb{I} \cup H$, where $H$ is nonempty subset of $\mathbb{Q}.$ Therefore, $Fr^{C}(K)\nsubseteq Fr(K).$ For $Fr^{C}(K)\nsubseteq Bd^{C}(K)$, let $K= (1,2]$ be a subsets in $(\mathbb{R},\mathcal{U})$, then $Bd^{C}(K)=\{2\} $ and $Fr^{C}(K)=\{1,2\} $. Hence, $Fr^{C}(K)\nsubseteq Bd^{C}(K).$

\end{example}

\begin{definition}\label{13p12}
 Let $K$ be a subset of a topological space $(X \mathcal{T}).$ Then {\it $C$-exterior} of $K$ is defined as $Ext^{C}(K)=int^{C}(X\setminus K).$

\end{definition}
\begin{theorem}
   Let $K$ be a subset of a topological space $(X \mathcal{T}).$ Then we have following properties:
  \begin{enumerate}
    \item [(i)] $Ext^{C}(K)\subset Ext(K)$, where $Ext(K)$ denotes the exterior of $K.$
    \item [(ii)] $Ext^{C}(K)$ is open.
    \item [(iii)]  $Ext^{C}(K)= X\setminus cl^{C}(K).$
    \item [(iv)] $Ext^{C}(Ext^{C}(K))=int^{C}(cl^{C}(K)).$
    \item [(v)] If $K\subseteq H,$ then $Ext^{C}(H)\subseteq Ext^{C}(K).$
 \item [(vi)]$Ext^{C}(X\setminus Ext^{C}(K))=Ext^{C}(K).$
 \item [(vii)] $int^{C}(K)\subset  Ext^{C}(Ext^{C}(K)).$
 \item [(viii)] $Ext^{C}(K\cup H)\subset Ext^{C}(K) \cup Ext^{C}(H).$
 \item [(ix)]  $Ext^{C}(K\cap H)\supset Ext^{C}(K) \cap Ext^{C}(H).$
 \item [(x)]$X= int^{C}(K)\cup Ext^{C}(K)\cup Fr^{C}(K).$
  \end{enumerate}
\end{theorem}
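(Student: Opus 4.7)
The plan is to reduce all ten parts to the duality $int^{C}(K)=X\setminus cl^{C}(X\setminus K)$ established in Theorem \ref{13p15}, together with the monotonicity, idempotence and inclusion properties of $int^{C}$ and $cl^{C}$ from Theorems \ref{13p14} and \ref{13p16}. Since $Ext^{C}(K)$ is by definition $int^{C}(X\setminus K)$, every claim about $Ext^{C}$ translates directly into one about $int^{C}$ applied to a complement, so no genuinely new topology is required.

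I would dispatch the items in the order stated, which is essentially their order of dependency. Part (i) follows from $int^{C}(H)\subseteq int(H)$ (Theorem \ref{13p14}(i)) applied to $H=X\setminus K$. Part (ii) is immediate because $int^{C}$ of any set is open. Part (iii) is obtained by substituting $X\setminus K$ for $K$ in Theorem \ref{13p15}(i), which collapses the double complement. Part (iv) chains (iii) with itself: $Ext^{C}(Ext^{C}(K))=int^{C}(X\setminus(X\setminus cl^{C}(K)))=int^{C}(cl^{C}(K))$. Monotonicity (v) follows from the monotonicity of $int^{C}$ after complementing. Part (vi) combines (iii) with the idempotence $int^{C}\circ int^{C}=int^{C}$ (Theorem \ref{13p16}(iv)), recognizing that $Ext^{C}(K)$ is itself $C$-open. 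Part (vii) uses (iv) together with $K\subseteq cl^{C}(K)$ and monotonicity of $int^{C}$.

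Parts (viii) and (ix) are in fact weak consequences of (v): from $K,H\subseteq K\cup H$ monotonicity actually gives $Ext^{C}(K\cup H)\subseteq Ext^{C}(K)\cap Ext^{C}(H)\subseteq Ext^{C}(K)\cup Ext^{C}(H)$, and analogously from $K\cap H\subseteq K$ and $K\cap H\subseteq H$ one gets $Ext^{C}(K)\cap Ext^{C}(H)\subseteq Ext^{C}(K)\cup Ext^{C}(H)\subseteq Ext^{C}(K\cap H)$. Finally, (x) is a disjoint-partition calculation: by (iii) and Definition \ref{13p11}, $Ext^{C}(K)\cup Fr^{C}(K)=(X\setminus cl^{C}(K))\cup(cl^{C}(K)\setminus int^{C}(K))=X\setminus int^{C}(K)$, so adjoining $int^{C}(K)$ exhausts $X$. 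The only point demanding any care is invoking Theorem \ref{13p15} with the correct set complemented, and in (vi) using that $int^{C}$ of any set is a $C$-open set, as the paper tacitly assumes in Definition \ref{13p7}; otherwise the argument is pure bookkeeping and presents no real obstacle.
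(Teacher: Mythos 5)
Your proposal is correct and takes essentially the same route as the paper: both reduce every item to the duality $Ext^{C}(K)=int^{C}(X\setminus K)=X\setminus cl^{C}(K)$ from Theorem \ref{13p15} together with the monotonicity and idempotence of $int^{C}$. The only cosmetic differences are that for (viii)--(ix) you invoke monotonicity (v) directly where the paper routes through $cl^{C}(K\cup H)=cl^{C}(K)\cup cl^{C}(H)$, and you make explicit (and correctly flag) the tacit assumption, shared with the paper's own argument in (ii) and (vi), that $int^{C}$ of a set is itself $C$-open.
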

\begin{proof}
 The property $(i)$, follow from  Definition \ref{13p12} and Theorem \ref{13p14}. The property $(ii)$, follow from  Definition \ref{13p7}. The property $(iii)$, follow from  Theorem \ref{13p15} and Definition \ref{13p12}. To prove $(iv)$ $Ext^{C}(Ext^{C}(K))=Ext^{C}(int^{C}(X\setminus K))= Ext^{C}(X\setminus cl^{C}(K) )=int^{C}(X\setminus (X\setminus cl^{C}(K) ))=int^{C}(cl^{C}(K)).$ To prove $(v)$ since $K\subseteq H,$ then $X\setminus H \subseteq X \setminus K,$ then $int^{C} (X\setminus H )\subseteq int^{C}(X \setminus K),$ hence,  $Ext^{C}(H)\subseteq Ext^{C}(K).$ For $(vi),$ $Ext^{C}(X\setminus Ext^{C}(K))= Ext^{C}(X\setminus int^{C}(X\setminus K)) =  int^{C}(X \setminus (X\setminus int^{C}(X\setminus K)))=int^{C}(int^{C}(X\setminus K))=  int^{C}(X\setminus K) = Ext^{C}(K).$  For $(vii),$ $int^{C}(K)\subseteq  int^{C}(cl^{C}(K))\subseteq int^{C}(X\setminus int^{C} (X\setminus K))=int^{C}(X\setminus Ext^{C}(K))=Ext^{C}(Ext^{C}(K)).$ The property $(viii)$  $Ext^{C}(K\cup H)=  int^{C}(X\setminus(K\cup H))= (X\setminus cl^{C}(K\cup H))= (X\setminus (cl^{C}(K)\cup cl^{C}(H)))\subset X\setminus cl^{C}(K)\cup X\setminus cl^{C}(H) =( X\setminus cl^{C}(K))\cap( X\setminus cl^{C}(H)))=Ext^{C}(K) \cap Ext^{C}(H)\subset Ext^{C}(K) \cup Ext^{C}(H).$ The property $(ix)$: $Ext^{C}(K\cap H)=  int^{C}(X\setminus(K\cap H))= (X\setminus cl^{C}(K\cap H))\supset (X\setminus (cl^{C}(K)\cap cl^{C}(H))) =( X\setminus cl^{C}(K))\cup( X\setminus cl^{C}(H)))=Ext^{C}(K) \cup Ext^{C}(H)\supset Ext^{C}(K) \cap Ext^{C}(H).$ The property $(x)$ follow from the Definitions \ref{13p11} and \ref{13p12}.

\end{proof}
In general $Ext(K)\nsubseteq Ext^{C}(K),$ $Ext^{C}(K\cup H)\nsupseteq Ext^{C}(K) \cup Ext^{C}(H)$ and $Ext^{C}(K\cap H)\nsubseteq Ext^{C}(K) \cap Ext^{C}(H).$ For example:
\begin{example}
By Example \ref{14p1}, there exists $K=\mathbb{I}$, such that $Ext(K)=int(\mathbb{R}\setminus \mathbb{I})=int(\mathbb{Q})=\mathbb{Q}$. But $Ext^{C}(K)=int^{C}(\mathbb{R}\setminus \mathbb{I})=int^{C}(\mathbb{Q})\subsetneqq \mathbb{Q}$, because $\mathbb{Q}$ is not $C$-open set. Hence, $Ext(K)\nsubseteq Ext^{C}(K).$ For $Ext^{C}(K \cup H)\nsupseteq Ext^{C}(K) \cup Ext^{C}(H)$, let $K=(-\infty,2)$ and $H=(0,\infty),$ then $Ext^{C}(K\cup H)=Ext^{C}(\mathbb{R})=\emptyset$ and $Ext^{C}(K) \cup Ext^{C}(H)=(2,\infty) \cup (-\infty,0).$ Hence, $Ext^{C}(K\cup H)\nsupseteq Ext^{C}(K) \cup Ext^{C}(H)$. For $Ext^{C}(K\cap H)\nsubseteq Ext^{C}(K) \cap Ext^{C}(H)$, let $K=(-\infty,2]$ and $H=[2,\infty),$ then $Ext^{C}(K\cap H)=Ext^{C}(\{2\})=\mathbb{R}\setminus \{2\}$ and $Ext^{C}(K) \cap Ext^{C}(H)=(2,\infty) \cap (-\infty,2)=\emptyset$, hence, $Ext^{C}(K\cap H)\nsubseteq Ext^{C}(K) \cap Ext^{C}(H).$
\end{example}

\section{$C$-continuity and $C$-compactness }
In this section, we define $C$-continuous functions and $C$-compact sets via the concepts of $C$-open and $C$-closed spaces and investigate their master properties.
\begin{definition}

A function $h:(X, \mathcal{T}) \to (Y,\mathcal{P})$ is said to be {\it $C$-continuous} if $h^{-1}(U)$ is $C$-open in $X$ for all open sets $U$ in $Y.$
\end{definition}

\begin{definition}
Let $(X, \mathcal{T})$ and $(Y,\mathcal{P})$ be topological spaces and $h:(X, \mathcal{T}) \to (Y,\mathcal{P})$ be a $C$-continuous functions. $h$ is {\it $C$-open} if and only if for all open sets $U\subseteq X,$ $h(U)$ is $C$-open set in $Y.$ $h$ is {\it $C$-closed} if and only if for all closed sets $V\subseteq X,$ $h(V)$ is $C$-closed set in $Y.$

\end{definition}

\begin{definition}

A bijection function $h:(X, \mathcal{T}) \to (Y,\mathcal{P})$ is said to be {\it $C$-hmoeomrphism} if and only if  $h$ and $h^{-1}$ are $C$-continuous.
\end{definition}
\begin{theorem}
 Any $C$-continuous function is continuous.
\end{theorem}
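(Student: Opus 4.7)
The plan is to unwind the definition of $C$-continuity and then invoke the implication chain already established in Section~2, namely that every $C$-open set is open. Concretely, I would start by fixing an arbitrary open set $U$ in the codomain $(Y,\mathcal{P})$ and applying the hypothesis that $h$ is $C$-continuous to conclude that $h^{-1}(U)$ is $C$-open in $(X,\mathcal{T})$.

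Next I would cite the implication ``$C$-open set $\Longrightarrow$ open set'' displayed just before Theorem~\ref{aaaa} (which is an immediate consequence of Definition~\ref{13p8}, since a $C$-open set is open by definition). Applying this to $h^{-1}(U)$ gives that $h^{-1}(U)$ is open in $X$. Since $U$ was an arbitrary open set in $Y$, the standard characterization of continuity via preimages of open sets then yields that $h$ is continuous.

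There is no real obstacle here: the result is essentially a definition-chase that packages the earlier observation about $C$-open sets into a statement about functions. The only thing to be slightly careful about is to make clear that we are using the preimage characterization of continuity (as opposed to any pointwise $\varepsilon$--$\delta$ style formulation), so that the argument goes through for arbitrary topological spaces $(X,\mathcal{T})$ and $(Y,\mathcal{P})$ without any additional separation or countability assumptions.
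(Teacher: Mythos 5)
Your proposal is correct and follows exactly the paper's own argument: take an arbitrary open $U$ in $Y$, use $C$-continuity to get $h^{-1}(U)\in CO(X)$, and then apply the fact that every $C$-open set is open to conclude $h^{-1}(U)\in\mathcal{T}$. No differences worth noting.
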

\begin{proof}
 Let $h:(X, \mathcal{T}) \to (Y,\mathcal{P})$ be a $C$-continuous function. Let $U$ be any open set in $Y$, then by $C$-continuity $h^{-1}(U)\in CO(X)$. Since any $C$-open set is open set, then $h^{-1}(U)\in \mathcal{T}$.
\end{proof}
\begin{theorem}

 Any $F$-continuous function is $C$-continuous.
\end{theorem}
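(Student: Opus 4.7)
The plan is to invoke directly the implication already recorded in Section~2 that every $F$-open set is a $C$-open set, together with the definition of $F$-continuity from the reference \cite{Mesfer2}, which we understand to mean: $h$ is $F$-continuous if $h^{-1}(U)$ is $F$-open in $X$ for every open set $U$ in $Y$. Under this reading, the statement reduces to a one-line chase through definitions.

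First I would let $h:(X,\mathcal{T})\to(Y,\mathcal{P})$ be $F$-continuous and fix an arbitrary open set $U$ in $Y$. By $F$-continuity, $h^{-1}(U)$ is an $F$-open subset of $X$, meaning $h^{-1}(U)$ is open in $X$ and $cl(h^{-1}(U))\setminus h^{-1}(U)$ is finite. Since every finite set is countable, $cl(h^{-1}(U))\setminus h^{-1}(U)$ is countable, so $h^{-1}(U)$ meets the definition of a $C$-open set (this is exactly the implication $F\text{-open}\Longrightarrow C\text{-open}$ displayed just after the proof of Theorem~\ref{13p4}). As $U$ was arbitrary, $h^{-1}(U)\in CO(X)$ for every open $U\subseteq Y$, so $h$ is $C$-continuous.

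There is no real obstacle here; the only point worth flagging is that the argument hinges on the definition of $F$-continuity adopted in \cite{Mesfer2}, which is the natural analogue of $C$-continuity with "$F$-open" in place of "$C$-open". Once that definition is in hand, the result is a direct corollary of the set-theoretic fact "finite $\Rightarrow$ countable" applied pointwise to preimages. No separate treatment of closed sets or of the frontier is needed, and no use of Theorem~\ref{aaaa} or of the operator identities from Section~3 is required.
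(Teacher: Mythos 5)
Your proof is correct and follows exactly the same route as the paper, which simply observes that every $F$-open set is $C$-open (since finite sets are countable) and applies this to the preimage of each open set. The paper states this in one line; you have merely spelled out the details.
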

\begin{proof}
  Obvious, because any $F$-open set is $C$-open.
\end{proof}

  It is clear, for any topological space $(X,\mathcal{T})$ we have have.
  \begin{center}
  $F$-continuous function $\Longrightarrow$ $C$-continuous function $\Longrightarrow$ continuous function.
\end{center}

 None of the above implications is reversible.\\

 Here an example of continuous function is neither $C$-continuous nor $F$-continuous.

 \begin{example}
 
  Let $(\mathbb{R},\mathcal{T_{\mathbb{I}}})$ be the excluded set topological space on $\mathbb{R}$ by $\mathbb{I}.$ Then the identity function $id:(\mathbb{R},\mathcal{T_{\mathbb{I}}})\to (\mathbb{R},\mathcal{T_{\mathbb{I}}})$ is continuous function, which is neither $C$-continuous nor $F$-continuous, because $\mathbb{Q}\in \mathcal{T_{\mathbb{I}}}$ is open and $id^{-1}(\mathbb{Q})=\mathbb{Q}$ is neither $C$-open nor $F$-open set, because $cl(\mathbb{Q})\setminus \mathbb{Q}=\mathbb{R}\setminus \mathbb{Q}=\mathbb{I}$ is uncountable set.
 \end{example}
 Also, there is an example of $C$-continuous function which is not $F$-continuous function.
 \begin{example}
 Let $(\mathbb{R},\mathcal{C})$ be the co-countable topological space (see \cite{Steen}). Let $id$ be the identity function such that $id:(\mathbb{R},\mathcal{C}) \to (\mathbb{R},\mathcal{C})$ and let $K$ be any open set in $(\mathbb{R},\mathcal{C})$, then $K$ is $C$-open, because $cl(K)\setminus K=\mathbb{R}\setminus K$ is countable. Hence, $id:(\mathbb{R},\mathcal{C}) \to (\mathbb{R},\mathcal{C})$ is $C$-continuous function. However, $\mathbb{R}\setminus\mathbb{Z}$ is open set such that $id^{-1}(\mathbb{R}\setminus\mathbb{Z})=\mathbb{R}\setminus\mathbb{Z}$ is not $F$-open set, because $cl(\mathbb{R}\setminus\mathbb{Z})\setminus (\mathbb{R}\setminus\mathbb{Z})=\mathbb{R}\setminus (\mathbb{R}\setminus\mathbb{Z})=\mathbb{Z}$ is not finite set.
 \end{example}
\begin{definition}

Let $X$ be a topological space, then $X$ is $C$-compact (resp. $C$-Lindel$\ddot{o}$f) if and only if any open cover of $X$ has a finite (resp. countable) subcover of $C$-open sets.
\end{definition}
\begin{definition}

Let $X$ be a topological space, then $X$ is $C$-countably compact if and only if any countable open cover of $X$ has a finite subcover of $C$-open sets.
\end{definition}
\begin{theorem}
  Any $C$-compact space is compact space.
\end{theorem}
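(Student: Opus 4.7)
The plan is to show the statement directly from the definitions of $C$-compactness and compactness, using no tool other than the observation that a finite subcover of $C$-open sets is, a fortiori, a finite subcover.

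First I would fix an arbitrary topological space $(X,\mathcal{T})$ which is assumed to be $C$-compact, and let $\{U_{\alpha}\}_{\alpha \in \Lambda}$ be an arbitrary open cover of $X$. By the definition of $C$-compactness, this open cover admits a finite subcover whose members are $C$-open, that is, there exist $\alpha_{1},\alpha_{2},\dots,\alpha_{n} \in \Lambda$ such that each $U_{\alpha_{i}}$ is $C$-open and $X=\bigcup_{i=1}^{n} U_{\alpha_{i}}$.

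The next step is the key observation: this finite subcollection $\{U_{\alpha_{1}},\dots,U_{\alpha_{n}}\}$ is already a finite subcover of the original open cover $\{U_{\alpha}\}_{\alpha \in \Lambda}$, regardless of the extra $C$-open property of its members. Since the open cover was arbitrary, every open cover of $X$ admits a finite subcover, and therefore $X$ is compact by definition.

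I do not expect any real obstacle here; the proof is essentially a definitional unwinding, and the only subtlety to note is that the implication $C$-open $\Longrightarrow$ open (established just after Definition \ref{13p9}) guarantees that the sets extracted by $C$-compactness live inside the ambient topology, so no mismatch between the two notions of "cover" can occur.
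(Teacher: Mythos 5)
Your proof is correct and is essentially the paper's own argument spelled out in detail: the paper simply remarks that the claim is obvious because every $C$-open set is open, and your write-up makes explicit that the finite subcover of $C$-open sets guaranteed by $C$-compactness is already a finite subcover in the ordinary sense. No issues.
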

\begin{proof}
  Obvious, because any $C$-open set is open.
\end{proof}
\begin{corollary}
    Any $C$-Lindel$\ddot{o}$f (resp., $C$-countably compact) space is Lindel$\ddot{o}$f (resp., countably compact) space.
\end{corollary}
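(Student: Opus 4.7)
The plan is to imitate verbatim the argument just given for $C$-compact $\Longrightarrow$ compact, since the only topological input needed is the basic containment already recorded in Definition \ref{13p8}: every $C$-open set is open. Both halves of the corollary are then pure definition-unpacking.

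For the $C$-Lindel\"of half, I would start with an arbitrary open cover $\mathcal{U}$ of $X$. By the definition of $C$-Lindel\"of, $\mathcal{U}$ admits a countable subcover $\{V_{n}\}_{n\in\mathbb{N}}$ whose members are $C$-open. Since each $V_{n}$ is in particular open, $\{V_{n}\}_{n\in\mathbb{N}}$ is already a countable open subcover of $\mathcal{U}$, so $X$ is Lindel\"of. The $C$-countably compact half proceeds identically: start with a countable open cover of $X$, extract a finite subcover consisting of $C$-open sets by the hypothesis, and observe that each such $C$-open set is open, so what we have is a finite open subcover. Hence $X$ is countably compact.

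I do not foresee any real obstacle. The entire content of the corollary is that the implication $C$\nobreakdash-open $\Longrightarrow$ open propagates mechanically from individual sets to covering properties. The only thing worth double-checking is the quantifier structure of the two covering definitions, namely that the finite/countable subcollection produced by the $C$-covering hypothesis is genuinely a cover by sets which, being $C$-open, are in particular open; once this is noted, there is nothing further to prove and a single sentence of the form ``Obvious, because any $C$-open set is open'' (matching the style of the preceding theorem) suffices.
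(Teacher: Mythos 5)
Your proof is correct and follows exactly the paper's intended reasoning: the corollary is left to the single observation that every $C$-open set is open, so the finite or countable subcover of $C$-open sets guaranteed by the hypothesis is already an open subcover. Nothing further is needed.
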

\begin{theorem}\label{14p3}
  Any $F$-compact space is $C$-compact space.
\end{theorem}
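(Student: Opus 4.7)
The plan is to mirror the strategy used two statements earlier (where $C$-compact $\Rightarrow$ compact was deduced from the one-line fact that every $C$-open set is open), this time invoking the companion implication from Section 2 that every $F$-open set is a $C$-open set. The structure of the two compactness notions is identical except for which class of ``nice'' open sets one is required to produce in the finite subcover, so the argument should be essentially a direct translation.

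More concretely, I would begin by fixing an arbitrary open cover $\mathcal{U} = \{U_\alpha : \alpha \in \Lambda\}$ of $X$. Since $X$ is $F$-compact, the definition supplies a finite subfamily $\{U_{\alpha_1}, \ldots, U_{\alpha_n}\} \subseteq \mathcal{U}$ which still covers $X$ and whose members are all $F$-open. The key observation, already recorded in Section 2 by the implication chain
\[
F\text{-open set} \;\Longrightarrow\; C\text{-open set} \;\Longrightarrow\; \text{open set},
\]
is that each $U_{\alpha_i}$ is automatically $C$-open: the set $cl(U_{\alpha_i}) \setminus U_{\alpha_i}$ is finite by $F$-openness, hence countable. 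Thus $\{U_{\alpha_1}, \ldots, U_{\alpha_n}\}$ is a finite $C$-open subcover of $\mathcal{U}$, which witnesses $C$-compactness of $X$.

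There is no genuine obstacle here; the entire content reduces to one step, namely the previously established fact that every $F$-open set qualifies as $C$-open. I would keep the write-up as brief as the author's proof of the analogous Theorem on $C$-compact $\Rightarrow$ compact, noting that the exact same reasoning, with only the keywords replaced, yields the corresponding statements for $F$-Lindel\"of $\Rightarrow$ $C$-Lindel\"of and $F$-countably compact $\Rightarrow$ $C$-countably compact should the author wish to state them as a corollary.
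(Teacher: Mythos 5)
Your proposal is correct and takes exactly the same route as the paper: the paper's proof is the one-line observation that every $F$-open set is $C$-open, and your write-up simply unpacks that same fact (a finite set is countable, so a finite $F$-open subcover is already a finite $C$-open subcover). No differences worth noting.
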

\begin{proof}
  Obvious, because any $F$-open set is $C$-open.
\end{proof}
\begin{corollary}\label{13ff1}

    Any $F$-Lindel$\ddot{o}$f (resp., $F$-countably compact) space is $C$-Lindel$\ddot{o}$f (resp., $C$-countably compact) space.
\end{corollary}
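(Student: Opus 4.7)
The plan is to mirror the proof of Theorem \ref{14p3} almost verbatim, since the only difference between $F$-compactness, $F$-Lindel\"ofness and $F$-countable compactness is the cardinality of the subcover selected from an (arbitrary, arbitrary, countable) open cover. The single ingredient needed is the implication established in Section 2, namely that every $F$-open subset of $(X,\mathcal{T})$ is $C$-open.

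First, I would handle the $F$-Lindel\"of case. Let $X$ be $F$-Lindel\"of and let $\mathcal{U}$ be an arbitrary open cover of $X$. By definition of $F$-Lindel\"ofness, there exists a countable subcollection $\{U_n : n\in\mathbb{N}\}\subseteq\mathcal{U}$ which covers $X$ and such that every $U_n$ is $F$-open in $X$. Because every $F$-open set is $C$-open, each $U_n$ is $C$-open, so $\{U_n : n\in\mathbb{N}\}$ is a countable subcover of $\mathcal{U}$ consisting of $C$-open sets. Hence $X$ is $C$-Lindel\"of.

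Next, I would treat the $F$-countably compact case in the same manner. Let $\mathcal{U}$ be a countable open cover of $X$. By $F$-countable compactness, there is a finite subcover $\{U_1,\dots,U_n\}\subseteq\mathcal{U}$ with every $U_i$ being $F$-open, hence $C$-open. Thus $\{U_1,\dots,U_n\}$ witnesses $C$-countable compactness for the cover $\mathcal{U}$.

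There is no real obstacle here: the corollary is a direct consequence of the pointwise implication \emph{$F$-open $\Longrightarrow$ $C$-open} together with the fact that the definitions of $F$- and $C$-variants of Lindel\"of/countable compactness share the same quantifier structure over open covers. The only thing to watch is stylistic consistency with Theorem \ref{14p3}, whose author simply wrote ``Obvious, because any $F$-open set is $C$-open'', so a one-line proof of the same form is fully adequate.
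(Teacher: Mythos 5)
Your proposal is correct and follows exactly the route the paper intends: the corollary is left without an explicit proof because it is the same one-line argument as Theorem~\ref{14p3} (``any $F$-open set is $C$-open''), applied with a countable subcover of an arbitrary cover, resp.\ a finite subcover of a countable cover. Your expanded version merely makes that implicit argument explicit and is fully adequate.
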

\begin{theorem}\label{13p6}

  Let $h:(X, \mathcal{T}) \to (Y,\mathcal{P})$ is $C$-continuous, onto, $C$-open function and $(X, \mathcal{T})$ is $C$-compact, then $Y$ is $C$-compact.

\end{theorem}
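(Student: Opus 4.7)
The plan is to pull an arbitrary open cover of $Y$ back through $h$, apply $C$-compactness of $X$, and then push the resulting finite subcover forward to $Y$, using the $C$-openness of $h$ to certify that the pushed-forward sets are $C$-open in $Y$. Concretely, let $\{V_\alpha : \alpha \in \Lambda\}$ be an arbitrary open cover of $Y$. Since $h$ is $C$-continuous, each $h^{-1}(V_\alpha)$ is $C$-open in $X$, and in particular open. Because $h$ is onto, the family $\{h^{-1}(V_\alpha) : \alpha \in \Lambda\}$ is an open cover of $X$.

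Next, I would invoke $C$-compactness of $X$ on this open cover to extract finitely many indices $\alpha_1, \dots, \alpha_n$ with $X = \bigcup_{i=1}^n h^{-1}(V_{\alpha_i})$. Applying $h$ and using surjectivity yields $Y = h(X) = \bigcup_{i=1}^n h(h^{-1}(V_{\alpha_i})) = \bigcup_{i=1}^n V_{\alpha_i}$, so $\{V_{\alpha_1}, \dots, V_{\alpha_n}\}$ is a finite subcover of $Y$ drawn from the original cover.

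It remains to verify that each $V_{\alpha_i}$ is $C$-open in $Y$. Since $h^{-1}(V_{\alpha_i})$ is $C$-open in $X$, hence open, and since $h$ is $C$-open, the image $h(h^{-1}(V_{\alpha_i}))$ is $C$-open in $Y$; surjectivity of $h$ identifies this image with $V_{\alpha_i}$. Thus $\{V_{\alpha_1}, \dots, V_{\alpha_n}\}$ is a finite subcover of the original cover consisting of $C$-open sets, witnessing that $Y$ is $C$-compact.

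I do not expect a serious obstacle here: this is essentially the standard \emph{continuous image of a compact space is compact} transport, and each of the three hypotheses on $h$ plays an isolated role, namely $C$-continuity to pull an open cover of $Y$ back to an open cover of $X$, surjectivity to recover a cover of $Y$ after pushing forward, and $C$-openness to ensure that the pushed-forward members of the finite subcover are $C$-open rather than merely open. The only point that requires care is precisely this last step, because what distinguishes $C$-compactness from ordinary compactness is the requirement that the finite subcover consists of $C$-open sets, and this is exactly what the $C$-open hypothesis on $h$ supplies.
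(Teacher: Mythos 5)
Your proposal is correct and follows essentially the same route as the paper's proof: pull the cover back via $C$-continuity, extract a finite subcover by $C$-compactness of $X$, push forward using surjectivity, and invoke $C$-openness of $h$ to certify that the sets $V_{\alpha_i}=h(h^{-1}(V_{\alpha_i}))$ are $C$-open. Your final step is in fact spelled out slightly more explicitly than in the paper, which simply asserts the conclusion from the $C$-openness of $h$.
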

\begin{proof}
Let $\{V_{\alpha} :\alpha \in \Lambda \}$ be any open cover of $Y$. Since $h$ is $C$-continuous,
then $h^{-1}(V_{\alpha})$ is $C$-open in $X$ for each $\alpha \in \Lambda$. Since $Y\subseteq \bigcup_{\alpha \in \Lambda} V_{\alpha} $, then $X=h^{-1}(Y)\subseteq h^{-1}(\bigcup_{\alpha \in \Lambda} V_{\alpha})=\bigcup_{\alpha \in \Lambda} h^{-1}(V_{\alpha})$ ), that is means $\{h^{-1}(V_{\alpha}): \alpha \in \Lambda \}$ is an open cover of $X$ (because any $C$-open set is open). Then, by the $C$-compactness of $X$, there exist $\alpha_{1},\alpha_{2}, \cdots \alpha_{n} \in \Lambda$ such that $h^{-1}(V_{\alpha_{1}})\cup h^{-1}(V_{\alpha_{2}})\cup \dots \cup h^{-1}(V_{\alpha_{n}})=X$, then $h[h^{-1}(V_{\alpha_{1}})\cup h^{-1}(V_{\alpha_{2}})\cup \dots \cup h^{-1}(V_{\alpha_{n}})]=h(X)$, then $h(h^{-1}(V_{\alpha_{1}}))\cup h( h^{-1}(V_{\alpha_{2}}))\cup \dots \cup h( h^{-1}(V_{\alpha_{n}}))=Y$, then $V_{\alpha_{1}}\cup V_{\alpha_{2}}\cup \dots \cup V_{\alpha_{n}}=Y$. Since $h$ is $C$-open, then $\{V_{\alpha_{1}}\cup V_{\alpha_{2}}\cup \dots \cup V_{\alpha_{n}}\}$ is a finite subcover of $C$-open sets for $Y$. Therefore, $(Y,\mathcal{P})$ is a $C$-compact space.
\end{proof}
A subset $B$ of a space $X$ is $C$-compact if and only if $B$ is a $C$-compact topological space with the subspace topology.
\begin{theorem}
  
  Let  $h:(X, \mathcal{T}) \to (Y,\mathcal{P})$ is $C$-continuous function and $(X, \mathcal{T})$ is $C$-compact, then $Y$ is compact.
\end{theorem}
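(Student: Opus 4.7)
The plan is to recycle the argument of Theorem \ref{13p6} but stop one step earlier, since we only want ordinary compactness of $Y$ and not $C$-compactness; this is exactly why the hypothesis that $h$ be $C$-open can be dropped here.

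First I would take an arbitrary open cover $\{V_\alpha : \alpha \in \Lambda\}$ of $Y$. By $C$-continuity of $h$, each preimage $h^{-1}(V_\alpha)$ is $C$-open in $X$. Since every $C$-open set is open (by Theorem \ref{13p14} and the definitions), the family $\{h^{-1}(V_\alpha) : \alpha \in \Lambda\}$ is in particular an open cover of $X = h^{-1}(Y)$, using that $Y \supseteq h(X)$ implies $h^{-1}(Y) = X$.

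Next I would invoke the $C$-compactness of $X$: any open cover of $X$ admits a finite subcover consisting of $C$-open sets. Because each $h^{-1}(V_\alpha)$ is already $C$-open, we can choose such a finite subcover directly from our family, obtaining indices $\alpha_1,\dots,\alpha_n \in \Lambda$ with $X = \bigcup_{i=1}^n h^{-1}(V_{\alpha_i})$. Applying $h$ to both sides and using the (implicit) surjectivity of $h$ yields $Y = h(X) = \bigcup_{i=1}^n V_{\alpha_i}$, which is a finite subcover of the original open cover, so $Y$ is compact.

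The only real obstacle is bookkeeping rather than mathematics: the statement does not explicitly say that $h$ is onto, but without surjectivity the last step only gives $h(X) = \bigcup_{i=1}^n V_{\alpha_i}$, showing compactness of the image $h(X)$ rather than of $Y$ itself. I would therefore either add surjectivity as a hypothesis (as in Theorem \ref{13p6}) or phrase the conclusion as ``$h(X)$ is compact,'' and otherwise the proof reduces to a routine push-pull between the two spaces with no further subtlety.
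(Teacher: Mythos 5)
Your proposal is correct and matches the paper's approach exactly: the paper's proof of this theorem is literally ``Using the same proof of Theorem \ref{13p6},'' which is precisely the truncated push--pull argument you describe. Your observation that surjectivity of $h$ is needed (it is stated in Theorem \ref{13p6} but omitted here) is a fair catch of an implicit hypothesis in the paper's statement, and your two suggested fixes are both appropriate.
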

\begin{proof}
   Using the same proof of Theorem \ref{13p6}. 
\end{proof}
\begin{theorem}
 Let  $h:(X, \mathcal{T}) \to (Y,\mathcal{P})$ is $F$-continuous, $F$-open, onto function and $(X, \mathcal{T})$ is $F$-compact, then $Y$ is $C$-compact.

\end{theorem}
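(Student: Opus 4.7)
The plan is to reduce this statement directly to Theorem \ref{13p6} by upgrading each $F$-hypothesis to the corresponding $C$-hypothesis, since every implication $F \Longrightarrow C$ has already been established earlier in the paper.

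First I would invoke Theorem \ref{14p3} to obtain that $(X,\mathcal{T})$ is $C$-compact, using the hypothesis that $(X,\mathcal{T})$ is $F$-compact. Next, I would observe that $h$ is $C$-continuous: this is immediate from the preceding theorem stating that every $F$-continuous function is $C$-continuous. Then I would verify that $h$ is $C$-open in the sense needed by Theorem \ref{13p6}: for every open $U \subseteq X$, the image $h(U)$ is $F$-open in $Y$ by hypothesis, and since every $F$-open set is $C$-open (as noted right after the definitions in Section 2), $h(U)$ is $C$-open in $Y$.

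With $h$ now established as $C$-continuous, $C$-open, and onto, and with $(X,\mathcal{T})$ being $C$-compact, I would directly apply Theorem \ref{13p6} to conclude that $(Y,\mathcal{P})$ is $C$-compact.

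I do not expect any genuine obstacle here: the entire argument is a chain of the standard implications $F \Longrightarrow C$ at the levels of spaces, functions, and open images, followed by a single appeal to Theorem \ref{13p6}. The only point worth stating carefully is that $F$-openness of the function transfers to $C$-openness, since this is the one place where the implication is applied to images of open sets rather than to a space or to a preimage.
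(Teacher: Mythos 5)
Your proof is correct, but it goes around the square in the opposite direction from the paper. The paper's own argument first applies Theorem 13 of \cite{Mesfer2} (the $F$-level mapping theorem: the $F$-continuous, $F$-open, onto image of an $F$-compact space is $F$-compact) to conclude that $Y$ is $F$-compact, and only then converts the \emph{conclusion} via Theorem \ref{14p3} ($F$-compact $\Rightarrow$ $C$-compact). You instead convert all the \emph{hypotheses} from $F$ to $C$ --- $X$ becomes $C$-compact by Theorem \ref{14p3}, $h$ becomes $C$-continuous by the earlier implication, and $h$ becomes $C$-open because $F$-open images are $C$-open sets --- and then apply the $C$-level mapping theorem, Theorem \ref{13p6}, proved in this paper. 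Both routes are valid since every $F$-notion implies the corresponding $C$-notion at each level. Your version has the advantage of being self-contained: it relies only on results established in the present paper rather than on an external theorem from \cite{Mesfer2}, at the cost of needing the extra (easy) observation that $F$-openness of the map transfers to $C$-openness, which you correctly single out as the one step requiring care, including the check that $h$ is $C$-continuous so that the paper's definition of a $C$-open function applies.
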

\begin{proof}
 The proof very clear, by Theorem \ref{14p3} any $F$-compact space is $C$-compact, and Theorem $13$ in \cite{Mesfer2} any $F$-continuious, $F$-open and onto image of $F$-compact space is $F$-compact space.
\end{proof}
\begin{theorem}
 Let $h:(X, \mathcal{T}) \to (Y,\mathcal{P})$ is $C$-continuous, $C$-open, onto function and $(X, \mathcal{T})$ is $C$-Lindel$\ddot{o}$f (resp., $C$-countably compact) space, then $Y$ is  $C$-Lindel$\ddot{o}$f (resp., $C$-countably compact).

\end{theorem}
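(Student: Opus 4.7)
The plan is to adapt the proof of Theorem \ref{13p6} almost verbatim, changing only the cardinality of the subcover (countable in place of finite) or the starting cover (a countable open cover in place of an arbitrary one). I will handle the two cases in parallel, since they share the same structure and only differ in which cardinal is used where.

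For the $C$-Lindel\"of case, I would start with an arbitrary open cover $\{V_{\alpha} : \alpha \in \Lambda\}$ of $Y$. By $C$-continuity of $h$, each preimage $h^{-1}(V_{\alpha})$ is $C$-open in $X$, and $\{h^{-1}(V_{\alpha}) : \alpha \in \Lambda\}$ covers $X$ because $h$ is defined on all of $X$ and $Y \subseteq \bigcup_{\alpha} V_{\alpha}$. Since every $C$-open set is open, this is in particular an open cover of $X$, so I can apply the $C$-Lindel\"of hypothesis on $X$ to extract a countable subfamily $\{h^{-1}(V_{\alpha_i})\}_{i \in \mathbb{N}}$ of $C$-open sets covering $X$.

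Next I would push forward along $h$. Using ontoness, $Y = h(X) = h\!\left(\bigcup_{i} h^{-1}(V_{\alpha_i})\right) = \bigcup_{i} h(h^{-1}(V_{\alpha_i})) = \bigcup_{i} V_{\alpha_i}$, so $\{V_{\alpha_i}\}_{i \in \mathbb{N}}$ covers $Y$. To upgrade this to a subcover of $C$-open sets, I invoke the $C$-open hypothesis on $h$: each $h^{-1}(V_{\alpha_i})$ is open in $X$ (being $C$-open), hence its image $V_{\alpha_i} = h(h^{-1}(V_{\alpha_i}))$ is $C$-open in $Y$. Thus $\{V_{\alpha_i}\}_{i \in \mathbb{N}}$ is a countable subcover of $Y$ consisting of $C$-open sets, proving $Y$ is $C$-Lindel\"of.

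For the $C$-countably compact case, the argument is identical except that I begin with a countable open cover of $Y$, pull it back along $h^{-1}$ to obtain a countable open cover of $X$ by $C$-open sets (using $C$-continuity), then apply $C$-countable compactness of $X$ to extract a finite subcover of $C$-open sets, and finally push forward via $h$ exactly as above to produce a finite $C$-open subcover of $Y$. There is no real obstacle in either case, since the template of Theorem \ref{13p6} transfers directly; the only subtle step worth spelling out is that $C$-openness of $h$ is precisely what ensures the pushed-forward sets $V_{\alpha_i}$ are not merely open but $C$-open in $Y$.
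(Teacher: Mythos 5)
Your proposal is correct and follows exactly the route the paper intends: the paper's own proof is simply the one-line remark ``Using the same proof of Theorem \ref{13p6},'' and your write-up is precisely that adaptation, with the cardinality of the extracted subcover (countable) or of the starting cover (countable) adjusted in the two respective cases. The one step you rightly emphasize --- that $C$-openness of $h$ is what makes the pushed-forward sets $V_{\alpha_i}=h(h^{-1}(V_{\alpha_i}))$ $C$-open in $Y$ --- is the same key point used in the proof of Theorem \ref{13p6}.
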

\begin{proof}
  Using the same proof of Theorem \ref{13p6}.
\end{proof}
\begin{theorem}
Let $h:(X, \mathcal{T}) \to (Y,\mathcal{P})$ is $F$-continuous, onto, $F$-open function, and $(X, \mathcal{T})$ is $F$-Lindel$\ddot{o}$f (resp., $F$-countably compact) space, then $Y$ is $C$-Lindel$\ddot{o}$f (resp., $C$-countably compact).
\end{theorem}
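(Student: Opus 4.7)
The statement parallels the previous $F$-compactness $\Rightarrow$ $C$-compactness transfer theorem, so the plan is to follow the same two-step reduction strategy rather than redo the cover-chasing argument from scratch. The cleanest route is: first upgrade the hypotheses from the $F$-setting to the $C$-setting, then apply the already-proved $C$-version of the image theorem.

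First I would observe the three needed implications at the level of the hypotheses. Since any $F$-open set is $C$-open (as the frontier being finite is a special case of being countable), the assumption that $h$ is $F$-continuous directly gives that $h^{-1}(U)$ is $F$-open, hence $C$-open, in $X$ for every open $U\subseteq Y$; so $h$ is $C$-continuous. Similarly, $h$ being an $F$-open map means $h(U)$ is $F$-open, hence $C$-open, for every open $U\subseteq X$; so $h$ is a $C$-open map. Finally, by Corollary \ref{13ff1}, $X$ being $F$-Lindel\"of (resp., $F$-countably compact) implies $X$ is $C$-Lindel\"of (resp., $C$-countably compact).

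With these three upgrades in hand, the function $h:(X,\mathcal{T})\to (Y,\mathcal{P})$ now satisfies all the hypotheses of the immediately preceding theorem ($C$-continuous, $C$-open, onto, with $X$ being $C$-Lindel\"of resp. $C$-countably compact). Applying that theorem directly yields that $Y$ is $C$-Lindel\"of (resp., $C$-countably compact), which is exactly what we want.

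There is no real obstacle here; the argument is essentially a bookkeeping exercise in the implications $F\Rightarrow C$ that the paper has already established. The only thing to be slightly careful about is stating both cases uniformly (Lindel\"of and countably compact) so that the single proof covers both, which can be done by writing the reduction once with a parenthetical ``(resp.)'' throughout, exactly in the style of the earlier compactness result whose proof was quoted as ``The proof very clear\ldots''.
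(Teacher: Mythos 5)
Your proof is correct, but it reduces in the opposite direction from the paper. The paper's proof keeps everything in the $F$-setting as long as possible: it invokes Theorem 15 of \cite{Mesfer2} (the $F$-continuous, $F$-open, onto image of an $F$-Lindel\"of, resp.\ $F$-countably compact, space is again $F$-Lindel\"of, resp.\ $F$-countably compact) and only then converts the conclusion to the $C$-setting via Corollary \ref{13ff1}. You instead convert all three hypotheses first --- $F$-continuity gives $C$-continuity, $F$-openness of the map gives $C$-openness, and Corollary \ref{13ff1} upgrades the domain --- and then apply the paper's own theorem on $C$-continuous, $C$-open, onto images of $C$-Lindel\"of (resp.\ $C$-countably compact) spaces. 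Both are two-step bookkeeping arguments and both are valid; the practical difference is that your route is self-contained within the present paper (every lemma you cite is proved or stated here), whereas the paper's route leans on an external result from the earlier $F$-open paper. Your version is arguably the cleaner citation chain. One small point of care: the paper's definition of a $C$-open map presupposes that the map is already $C$-continuous, so you were right to establish $C$-continuity before asserting that $h$ is a $C$-open map; keep that ordering explicit.
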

\begin{proof}

 Obviously, by Corollary \ref{13ff1} any $F$-Lindel$\ddot{o}$f (resp., $F$-countably compact) space is $C$-Lindel$\ddot{o}$f (resp., $C$-countably compact) space, and Theorem $15$ in \cite{Mesfer2} any $F$-continuious, $F$-open and onto image of $F$-Lindel$\ddot{o}$f (resp., $F$-countably compact) space is $C$-Lindel$\ddot{o}$f (resp., $C$-countably compact) space.
\end{proof}
\begin{theorem}
Let $h:(X, \mathcal{T}) \to (Y,\mathcal{P})$ is $C$-continuous and onto function and $(X, \mathcal{T})$ is $C$-Lindel$\ddot{o}$f (resp., $C$-countably compact) space, then $Y$ is Lindel$\ddot{o}$f (resp., countably compact).

\end{theorem}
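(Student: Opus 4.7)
The plan is to adapt the argument used in Theorem \ref{13p6}, noting that the $C$-openness of $h$ was invoked there only to upgrade the resulting subcover into one consisting of $C$-open sets in $Y$. Here the conclusion only requires ordinary Lindelöfness (or countable compactness), so that hypothesis is unnecessary and can be dropped from the statement.

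For the Lindelöf case, I would start with an arbitrary open cover $\{V_{\alpha}:\alpha\in\Lambda\}$ of $Y$. By $C$-continuity of $h$, each $h^{-1}(V_{\alpha})$ is $C$-open in $X$, and in particular open. Surjectivity of $h$ gives
\[
X=h^{-1}(Y)=h^{-1}\!\Bigl(\bigcup_{\alpha\in\Lambda}V_{\alpha}\Bigr)=\bigcup_{\alpha\in\Lambda}h^{-1}(V_{\alpha}),
\]
so $\{h^{-1}(V_{\alpha}):\alpha\in\Lambda\}$ is an open cover of $X$. Applying the $C$-Lindelöf hypothesis to this cover produces a countable subcollection $\{h^{-1}(V_{\alpha_{n}}):n\in\mathbb{N}\}$ of $C$-open sets that still covers $X$. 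Pushing forward through $h$ and using surjectivity once more yields
\[
Y=h(X)=\bigcup_{n\in\mathbb{N}} h(h^{-1}(V_{\alpha_{n}}))=\bigcup_{n\in\mathbb{N}} V_{\alpha_{n}},
\]
which is a countable subcover of the original open cover; thus $Y$ is Lindelöf.

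The $C$-countably compact case is verbatim the same argument with the index set $\Lambda$ chosen countable from the outset, and with "countable subcover" replaced by "finite subcover" when the hypothesis on $X$ is invoked. No step requires $h$ to be $C$-open, and the absence of that assumption is precisely the point that distinguishes this result from Theorem \ref{13p6}.

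The only genuine subtlety, and the step I would be most careful about, is ensuring that the subcover guaranteed by the $C$-Lindelöf (resp.\ $C$-countably compact) property of $X$ actually consists of sets drawn from the collection $\{h^{-1}(V_{\alpha})\}$. Once this is taken from the definition as given, the surjectivity identity $h(h^{-1}(V_{\alpha}))=V_{\alpha}$ immediately transfers the covering back to $Y$, and no further work is required.
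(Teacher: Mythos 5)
Your proposal is correct and is exactly what the paper intends: its proof of this theorem is simply ``Using the same proof of Theorem \ref{13p6},'' and you have written out that adaptation faithfully, correctly observing that the $C$-openness of $h$ is only needed when the conclusion demands a subcover of $C$-open sets in $Y$, which is not the case here. The handling of both the Lindel\"{o}f and countably compact variants, including starting from a countable cover in the latter case, matches the intended argument.
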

\begin{proof}
 Using the same proof of Theorem \ref{13p6}.
\end{proof}

 Obviously, for any topological space $(X,\mathcal{T})$ we have have.
  \begin{center}
 $F$-compactness $\Longrightarrow$ $C$-compactness $\Longrightarrow$ compactness.
\end{center}

 None of the above implications is reversible. Here an example of compact space is neither $C$-compact nor $F$-compact spaces.
\begin{example}
  Overlapping Interval Topology \cite{Steen}. On the set $X=[-1,1]$ we generate a topology from sets of the form $[-1,b)$ for $b>0$ and $(a,1]$ for $a<0.$ Then all sets of the form $(a,b)$ are also open. $X$ is compact, since in any open covering, the two sets which include $1$ and $-1$ will cover $X$. The space $X$ is neither $C$-compact nor $F$-compact spaces, because there exists $\{[-1,0.5),(-0.5,1]\}$ is an open cover for $X$ has no finite subcover of $C$-open sets or $F$-open sets, because $[-1,0.5)$ and $(-0.5,1]$ are neither $C$-open nor $F$-open sets ($cl[-1,0.5)\setminus [-1,0.5)=[-1,1]\setminus [-1,0.5)=[0.5,1]$ is neither countable nor finite set and $cl(-0.5,1]\setminus (-0.5,1]=[-1,1]\setminus (-0.5,1]=[-1, -0.5]$ is neither countable nor finite set).
\end{example}

\section{ Conclusion and other tasks}
In this article, we have displayed the notions of $C$-open sets and discussed its master
properties. Then, we have defined some operators via $C$-open and $C$-closed sets. We have disclosed the relationships between these operators and examined their mains features. In addition, we have introduced continuous functions and compact sets using $C$-open and $C$-closed sets and checked their main properties. \\

Our next works will concentrate on studying further topological concepts by
$C$-open and $C$-closed sets on separation axioms, connectedness, and the other topological properties.

\end{document}